\newcommand{\R}{{\mathbb R}}
\newcommand{\N}{{\mathbb N}}
\newcommand{\C}{{\mathbb C}}
\newcommand{\bbK}{{\mathbb{K}}}
\newcommand{\cF}{{\mathcal F}}
\newcommand{\cG}{{\mathcal G}}
\newcommand{\cP}{{\mathcal P}}
\DeclareMathOperator{\diag}{diag}
\newcommand {\ang} {\measuredangle}
\newcommand{\beq}{\begin{equation}}
\newcommand{\enq}{\end{equation}}
\newcommand{\lb}{\label}
\renewcommand{\ge}{\geqslant}
\renewcommand{\le}{\leqslant}
\numberwithin{equation}{section}
\allowdisplaybreaks \numberwithin{equation}{section}
\newtheorem{theorem}{Theorem}[section]
\newtheorem{proposition}[theorem]{Proposition}
\newtheorem{lemma}[theorem]{Lemma}
\newtheorem{corollary}[theorem]{Corollary}
\newtheorem{definition}[theorem]{Definition}
\theoremstyle{remark}
\newtheorem{remark}[theorem]{Remark}
\newtheorem{example}[theorem]{Example}
\begin{document}

\numberwithin{equation}{section}
\allowdisplaybreaks

\title{On a generalized notion of metrics}
  \thanks{Supported by the Deutsche Forschungsgemeinschaft (DFG, German Research
 Foundation) – SFB 1283/2 2021 – 317210226, Bielefeld University}



  \author{Wolf-J\"urgen Beyn}
  \address{Department of Mathematics, Bielefeld University\\
    P.O. Box 100131, D-33501 Bielefeld}
  \email{beyn@math.uni-bielefeld.de}


\vspace{-3mm}

\begin{abstract}
  In these notes we generalize the notion of a (pseudo) metric measuring the distance of  two
points,  to a (pseudo) $n$-metric which assigns a value to a tuple 
of $n \ge 2$ points. We present two principles  of constructing pseudo
$n$-metrics. The first one uses the Vandermonde determinant
while the second one  uses exterior products and is related to
the volume of the simplex spanned by the given points. We show that the second class
of examples induces pseudo $n$-metrics  on the unit sphere of a Hilbert space and on matrix manifolds such as the Stiefel
and the Grassmann manifold. Further, we construct a pseudo $n$-metric on
hypergraphs and discuss the problem of generalizing the Hausdorff metric
for closed sets to a pseudo $n$-metric.
\end{abstract}
  
\maketitle
\vspace{-3mm}

\textbf{Keywords.} Pseudo $n$-metric, exterior calculus, matrix manifolds, hypergraph \\
\vspace{1cm}
 \textbf{Mathematics Subject Classification (2020).} 51F99, 15A99

 \vspace{-3mm}
 
\section{Introduction}
\label{intro}
The classical notion of a (pseudo) metric or distance on a set $X$ is a map which
assigns to any two points in $X$ a real value such that $d$ is (semi-)definite,
symmetric, and satisfies the triangle inequality. Since the subject is vast
we just mention \cite{Ma22} as an elementary and \cite{Bu01} as an advanced monograph.
The purpose of this contribution is to investigate an $n$-dimensional
($n \ge 2$) generalization of this notion.
To be precise, we call a map
\begin{align*} d:X^n=\prod_{i=1}^n X \to \R
\end{align*}
a {\it pseudo $n$-metric on $X$} if it has the following properties:
  \begin{itemize}
    \item[(i)] (Semidefiniteness) 
    If $x=(x_1,\ldots,x_n)\in X^n$ satisfies $x_i=x_j$ for some $i\neq j$, then
    $d(x_1,\ldots,x_n)=0$.
  \item[(ii)] (Symmetry) 
    For all $x=(x_1,\ldots,x_n)\in X^n$ and all 
      $\pi\in \mathcal{P}_n$
      \begin{equation} \label{perm}
        d(x_{\pi(1)},\ldots,x_{\pi(n)})= d(x_1,\ldots,x_n).
      \end{equation}
      Here $\mathcal{P}_n$ denotes the group of permutations of $\{1,\ldots,n\}$.
    \item[(iii)] (Simplicial inequality)
      For all $x=(x_1,\ldots,x_n)\in X^n$ and $y \in X$
      \begin{equation} \label{tri}
        d(x_1,\ldots,x_n) \le \sum_{i=1}^n d(x_1,\ldots,x_{i-1},y,x_{i+1},\ldots,x_n).
      \end{equation}
      \end{itemize}
Clearly, a pseudo $2$-metric agrees with the notion of an ordinary
pseudo metric.
If we sharpen condition (i) to definiteness, i.e. \
 $ d(x_1,\ldots,x_n) = 0$ implies $x_i=x_j$ for some $i \neq j$,
then we call $d$ an $n$-metric.

We have two basic principles of constructing pseudo $n$-metrics. The first
one is based on the Vandermonde determinant (Section \ref{s2}). It leads
to a definite $n$-metric in the complex plane (or equivalently in $\R^2$)
and can be used to define pseudo $n$-metrics in spaces of complex-valued
functions (Theorem \ref{Cprop}, Examples \ref{ex1} and \ref{ex2}).
Although the straightforward generalization of the Vandermonde determinant to a product
of norms does not work, we present in Section \ref{sec6.0} an alternative generalization
via the norm of symmetric multilinear forms.

The second class of  pseudo $n$-metrics occurs in Eudlidean
vector spaces when $d(x_1,\ldots,x_n)$ is taken as the volume of
the simplex $\mathcal{S}(x_1,\ldots,x_n)$  spanned by $x_1,\ldots,x_n$, or (up to a factor of $(n-1)!$) as the
volume of the parallelepiped spanned by $x_2-x_1,\ldots,x_n-x_1$; see Theorem \ref{propcons}. While semidefiniteness and symmetry are obvious in this case,
the simplicial inequality reflects the fact that 
 $\mathcal{S}(x_1,\ldots,x_n)$ is covered by the simplices
$\mathcal{S}(x_1,\ldots,x_{i-1},y,x_{i+1},\ldots,x_n)$ for  $i=1,\ldots,n$, with
equality in case $y\in \mathcal{S}(x_1,\ldots,x_n)$. For $n \ge 3$
this pseudo $n$-metric is not definite but becomes zero when
the spanning vectors $x_2-x_1,\ldots,x_n-x_1$ are linearly dependent.
Instead of working directly with simplices we
present in Section \ref{s:lin} a more general approach which uses
pseudo $n$-norms and exterior calculus.

In differential geometry, the metric tensor  of a Riemannian manifold is
defined as a $2$-tensor on the tangent bundle. In principle, this is a local
definition using charts, it  does not  allow us to directly compare arbitrary
points on the manifold. This is then achieved by the Riemannian
distance  (or the metric in the sense of this paper) which employs
geodesics, i.e. \ paths of shortest length
connecting the two points, or more general metric structures;
see e.g. \cite[Ch.2,5]{Bu01}.

In \cite{Schullerwohlfarth06}, \cite{PSW09} the authors consider
  area-metrics on Riemannian manifolds which are metric $4$-tensors and which
  are more general than the canonical area metric induced by the metric
  $2$-tensor. Using this concept, some  interesting implications for
  gravity dynamics and electrodynamics are discussed.
  However, there seems to be no attempt to define a pseudo $3$-metric in our
  sense, i.e.\
  an area metric which is defined for any triple of points on the manifold
  and which satisfies the three properties above.
    Other generalizations of the metric notion, such as the 
  polymetrics in \cite{Mi98} or the multi-metrics in \cite{DR21}
  aim at spaces where several $2$-metrics are given, sometimes with
  different multiplicities.

  In Section \ref{sec4} we construct a specific pseudo $n$-metric on the unit
  ball of a Hilbert space, and we derive from this  suitable
  pseudo $n$-metrics on manifolds of matrices such as the Stiefel and
  the Grassmann manifold (see \cite{BZ21} for a recent overview of geometric
  and computational aspects).
  In Section \ref{sec5} we turn to metrics on discrete spaces and show
   how to extend the standard shortest path distance
  in an undirected graph  to a pseudo $n$-metric on
  a connected hypergraph; see \cite{Br13} for some general theory.
  
  Finally, in Section \ref{sec6} we discuss several externsions resp. open problems raised by the first sections.
  The first one deals with the generalization of the pseudo $n$-metric derived from the Vandermonde determinant.
  We show that the product of norms does not work in dimensions $\ge 3$, but that it is possible
  to construct pseudo $n$-metrics from the norm of a symmetric multilinear map which is
  of degree $\frac{1}{2}n(n-1)$.
  The second one is concerned with the search for a
  pseudo $n$-metric on the Grassmann manifold which is a canonical extension
  of the classical $2$-metric (\cite[Ch.6.4]{GvL2013}). The last
  one concerns a generalization of the Hausdorff metric for closed sets,
  where we show that an obvious definition fails to produce a pseudo $n$-metric.

  Due to its various areas of occurrence, we think that the axiomatic
  notion of a pseudo $n$-metric deserves to be studied in its own right.


\section{Basic properties and the Vandermonde example}\lb{s2}

In this secton we collect some basic constructions and properties of pseudo $n$-metrics. Our main example is based on the Vandermonde determinant which
leads to a pseudo $n$-metric that is homogeneous of degree $\frac{1}{2}n(n-1)$.
In the following, a mapping $d:X^n \to \R$ is called nonnegative if $d(x)\ge0$
holds for all $x\in X^n$. 
\begin{lemma} \label{lempos}
  \begin{itemize}
    \item[(i)]
        A pseudo $n$-metric on $X$ is nonnegative.
      \item[(ii)]  If the mapping $d:X^n \to \R$ is nonnegative, semidefinite,
        symmetric, and satisfies \eqref{tri} for all pairwise different elements $y,x_1,\ldots,x_n$,
     then $d$ is a  pseudo $n$-metric.
  \end{itemize}
  \end{lemma}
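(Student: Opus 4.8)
The plan is to derive (i) from a single well-chosen instance of the simplicial inequality, and to obtain (ii) by reducing the inequality for arbitrary arguments to the pairwise-distinct case via a short case distinction.

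For (i), fix $x=(x_1,\ldots,x_n)$ and apply \eqref{tri} not to $x$ itself but to the degenerate tuple $(x_2,x_2,x_3,\ldots,x_n)\in X^n$, choosing the free point to be $y=x_1$. The left-hand side vanishes by semidefiniteness, since $x_2$ occurs twice. On the right-hand side, replacing either of the first two entries (both equal to $x_2$) by $x_1$ produces, after applying symmetry \eqref{perm}, the tuple $(x_1,x_2,\ldots,x_n)$, so these two summands contribute $2\,d(x)$ in total; replacing any entry $x_i$ with $i\ge 3$ by $x_1$ leaves both copies of $x_2$ in place, so that summand is again $0$ by semidefiniteness. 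Thus \eqref{tri} collapses to $0\le 2\,d(x)$, which is precisely nonnegativity. The only point requiring care is the bookkeeping that ensures exactly the two doubled entries, and no others, give surviving terms.

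For (ii), the three defining properties other than the simplicial inequality are assumed outright, so it remains to promote \eqref{tri} from the pairwise-distinct hypothesis to arbitrary $y,x_1,\ldots,x_n$. I would distinguish three cases according to the coincidence pattern. First, if $x_1,\ldots,x_n$ are not pairwise distinct, then $d(x)=0$ by semidefiniteness while the right-hand side is a sum of terms that are nonnegative by hypothesis, so \eqref{tri} holds. Second, if the $x_i$ are pairwise distinct but $y=x_k$ for some $k$, then the summand with $i=k$ equals $d(x)$, whereas every summand with $i\neq k$ contains $x_k$ in both positions $i$ and $k$ and hence vanishes by semidefiniteness; the right-hand side therefore equals $d(x)$ and \eqref{tri} holds with equality. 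Third, if $y,x_1,\ldots,x_n$ are all pairwise distinct, \eqref{tri} is exactly the assumed inequality.

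The argument is elementary, the one genuine idea being the passage to the doubled tuple in (i), which turns semidefiniteness and symmetry into the lower bound $2\,d(x)\ge 0$. The main thing to verify is that the case distinction in (ii) is exhaustive: every configuration of coincidences among $y,x_1,\ldots,x_n$ lies in exactly one of the three cases, so that no degenerate situation is omitted.
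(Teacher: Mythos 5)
Your proposal is correct and follows essentially the same route as the paper: part (i) applies the simplicial inequality to the doubled tuple $(x_2,x_2,x_3,\ldots,x_n)$ with $y=x_1$ to extract $0\le 2\,d(x)$, and part (ii) handles the degenerate configurations by the same case distinction (coincident $x_i$'s give a trivially valid inequality, $y=x_k$ gives equality). No gaps.
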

\begin{proof}
   {\it (i):} \\ 
   We apply  \eqref{tri} with $y=x_1$ and obtain from the semidefiniteness
   and the symmetry
  \begin{align*}
    0& = d(x_2,x_2,x_3,\ldots,x_n) \le
    d(x_1,x_2,x_3,\ldots,x_n)+ d(x_2,x_1,x_3,\ldots,x_n) \\
    & + \sum_{i=3}^n d(x_2,x_2,x_3,\ldots,x_{i-1},x_1,x_{i+1},\ldots,x_n) 
     = 2 d(x_1,x_2,\ldots,x_n).
  \end{align*}
  {\it (ii):}\\
  In case $x_i=x_j$ for some $i \neq j$,  the inequality \eqref{tri} is obvious
  since $d$ is semidefinite and nonnegative.
 Further, if \ $y = x_j$ for some $j \in \{1,\ldots,n\}$, then
    the semidefiniteness implies
  \begin{align*}
    d(x_1,\ldots,x_n)& =d(x_1,\ldots,x_{j-1},y,x_{j+1}, \ldots, x_n)\\
    & =
      \sum_{i=1}^n d(x_1,\ldots,x_{i-1},y,x_{i+1},\ldots,x_n).
  \end{align*}
\flushright \end{proof}

\begin{proposition} \label{prop1.1}
  For a pseudo $n$-metric $d$ on a set $X$ the following holds:
  \begin{itemize}
  \item[(i)] $d_X$ defines a pseudo $n$-metric on any subset $Y\subset X$.
  \item [(ii)] If $d_Y$ is a pseudo $n$-metric an a set $Y$, then
    \begin{equation*} \label{eqprod}
      d_{X \times Y}\Big( \begin{pmatrix} x_1 \\ y_1 \end{pmatrix}, \ldots,
      \begin{pmatrix} x_n \\ y_n \end{pmatrix} \Big) :=
      \Big\| \begin{pmatrix} d_X(x_1,\ldots,x_n) \\ d_Y(y_1, \ldots,y_n) \end{pmatrix} \Big\|
    \end{equation*}
    defines a pseudo $n$-metric on $X \times Y$ for any monotone norm $\|\cdot\|$
    in $\R^2$.
    \item[(iii)] Let $\cF \subseteq X^Y$ be a subset of functions from some
set $Y$ to $X$. Further assume that a normed space
$Z \subseteq \R^{Y}$ is given with a monotone norm $\|\cdot \|_Z$ and
the following property:
\begin{align} \label{Fprop}
  f_1,\ldots,f_n \in \cF \Longrightarrow d(f_1(\cdot),\ldots,f_n(\cdot))
  \in Z.
\end{align}
Then a pseudo $n$-metric on $\cF$ is defined by
\begin{equation*} \label{pseudoF}
  d_{\cF}(f_1,\ldots,f_n)  = \| d(f_1(\cdot),\ldots,f_n(\cdot)) \|_Z.
\end{equation*}
 \end{itemize}
\end{proposition}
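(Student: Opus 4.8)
The plan is to verify the three defining properties (i)--(iii) in each of the three cases, exploiting throughout the nonnegativity of a pseudo $n$-metric established in Lemma \ref{lempos}(i). Part (i) is immediate: since $Y^n \subseteq X^n$, the restriction of $d$ to $Y^n$ inherits semidefiniteness, symmetry, and the simplicial inequality \eqref{tri} verbatim, because each of these conditions only quantifies over tuples and over a single auxiliary point, all of which may be taken in $Y$.

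For part (ii), semidefiniteness follows from the observation that equality of two of the pairs $(x_i,y_i)=(x_j,y_j)$ forces both $x_i=x_j$ and $y_i=y_j$, so that $d_X$ and $d_Y$ both vanish and the norm of the zero vector is $0$; symmetry is inherited componentwise from $d_X$ and $d_Y$. The crux is the simplicial inequality. Here I would first apply \eqref{tri} separately to $d_X$ and to $d_Y$, which, together with the nonnegativity of every summand, yields for a test point $(u,v)\in X\times Y$ the two componentwise estimates
\begin{align*}
  d_X(x_1,\ldots,x_n) &\le \sum_{i=1}^n d_X(x_1,\ldots,x_{i-1},u,x_{i+1},\ldots,x_n), \\
  d_Y(y_1,\ldots,y_n) &\le \sum_{i=1}^n d_Y(y_1,\ldots,y_{i-1},v,y_{i+1},\ldots,y_n).
\end{align*}
Since all quantities are nonnegative, the vector on the left is dominated entrywise (in absolute value) by the vector of the two right-hand sums; monotonicity of $\|\cdot\|$ then preserves this bound, and a final application of the triangle inequality for $\|\cdot\|$ distributes the norm over the sum, producing exactly the simplicial inequality for $d_{X\times Y}$.

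Part (iii) follows the same pattern, now with the pointwise evaluation map $y\mapsto d(f_1(y),\ldots,f_n(y))$ playing the role of the two coordinates. Semidefiniteness holds because $f_i=f_j$ makes this function identically zero by semidefiniteness of $d$, and symmetry is again pointwise. For the simplicial inequality, I would fix a test function $g\in\cF$ and apply \eqref{tri} to $d$ at each argument $y\in Y$ with auxiliary point $g(y)$, obtaining the pointwise inequality
\begin{equation*}
  d(f_1(y),\ldots,f_n(y)) \le \sum_{i=1}^n d(f_1(y),\ldots,f_{i-1}(y),g(y),f_{i+1}(y),\ldots,f_n(y)).
\end{equation*}
By \eqref{Fprop} each term here defines an element of $Z$, so the inequality is a genuine relation between nonnegative elements of $Z$; monotonicity of $\|\cdot\|_Z$ followed by its triangle inequality then yields $d_{\cF}(f_1,\ldots,f_n)\le\sum_i d_{\cF}(f_1,\ldots,f_{i-1},g,f_{i+1},\ldots,f_n)$.

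The only real subtlety, and the step I would watch most carefully, is ensuring in both (ii) and (iii) that the underlying inequalities hold between \emph{nonnegative} quantities before invoking monotonicity: a monotone norm only controls $\|a\|\le\|b\|$ from $|a_i|\le|b_i|$, so the nonnegativity guaranteed by Lemma \ref{lempos}(i) is precisely what lets us replace absolute values by the values themselves and chain the simplicial inequality through the norm. In (iii) one must additionally confirm, via \eqref{Fprop}, that every intermediate function obtained by substituting $g$ lies in $Z$, so that $\|\cdot\|_Z$ is actually defined on each summand.
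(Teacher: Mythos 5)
Your proof is correct and follows essentially the same route as the paper: restriction is immediate, and for (ii) and (iii) one applies the simplicial inequality componentwise resp.\ pointwise and then combines monotonicity of the norm with its triangle inequality. Your explicit remarks on nonnegativity (needed to invoke monotonicity) and on \eqref{Fprop} covering the substituted tuples are points the paper leaves implicit, but they do not change the argument.
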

\begin{proof}  Since most assertions are obvious we consider only the
  simplicial inequality in cases (ii) and (iii).
  Recall that a norm $\|\cdot\|_Z$
  on a linear space $Z$ of real-valued functions $f:Y \to \R$
  is called monotone (see e.g. \cite[Ch.5.4]{HJ2013}) iff for all $f_1,f_2 \in Z$
  \begin{align*}
    |f_1(y)| \le |f_2(y)| \; \forall y \in Y \Longrightarrow
    \|f_1\|_Z \le \|f_2\|_Z.
  \end{align*}
  In case (ii) one takes norms in the following vector inequality for $\xi\in X$,
  $\eta \in Y$
  \begin{align*}
    \begin{pmatrix} d_X(x_1,\ldots,x_n) \\ d_Y(y_1, \ldots,y_n) \end{pmatrix}
    \le \sum_{i=1}^n \begin{pmatrix} d_X(x_1,\ldots,x_{i-1},\xi,x_{i+1},\ldots,x_n) \\ d_Y(y_1,\ldots,y_{i-1},\eta,y_{i+1},\ldots,y_n) \end{pmatrix}.
\end{align*}
The proof of (iii) is analogous.
For $f_1,\ldots,f_n,g \in \cF$ observe the pointwise inequality
\begin{align*}
  d(f_1(y),\ldots,f_n(y)) & \le \sum_{i=1}^n d(f_1(y),\ldots,f_{i-1}(y),g(y),
  f_{i+1}(y),\ldots f_n(y)), \quad \forall y \in Y,
\end{align*}
which by the monotonicity of the norm implies
\begin{align*}
  d_{\cF}(f_1,\ldots,f_n) & \le \sum_{i=1}^n \| d(f_1(\cdot),\ldots,f_{i-1}(\cdot),g(\cdot),f_{i+1}(\cdot),\ldots,f_n(\cdot)) \|_Z \\
  & = \sum_{i=1}^n d_{\cF}(f_1,\ldots,f_{i-1},g,f_{i+1},\ldots, f_n).
\end{align*}
\flushright \end{proof}
Our  basic example of this section employs the well-known Vandermonde determinant associated with numbers $z_j \in \C$, $j=1,\ldots,n$ (see \cite[Ch.0.9]{HJ2013}):
\begin{equation} \label{CdefVandermond}
  V(z_1,\ldots,z_n) = \det \begin{pmatrix}
    z_1^{n-1} & \cdots & z_n^{n-1} \\
    \vdots & \cdots & \vdots \\
    1 & \cdots & 1 \end{pmatrix}=
  \prod_{1 \le i < j \le n} (z_i-z_j).
\end{equation}
\begin{theorem}(Vandermonde $n$-metric) \label{Cprop}
    The expression
    \begin{equation} \label{Cprodn}
      d_V(z_1,\ldots,z_n) = \prod_{1\le i < j \le n}|z_i-z_j|
      =|V(z_1,\ldots,z_n)|
    \end{equation}
    defines an $n$-metric in $\C$ and by restriction also in $\R$.
         \end{theorem}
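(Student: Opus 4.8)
The plan is to verify the three defining properties of an $n$-metric, taking full advantage of the factored form \eqref{Cprodn}. Symmetry is immediate: any permutation $\pi \in \mathcal{P}_n$ merely reorders the factors $z_i-z_j$ (possibly changing signs), and so leaves the product of moduli $\prod_{1\le i<j\le n}|z_i-z_j|$ unchanged. Nonnegativity is obvious, and definiteness is equally clear, since a product of moduli vanishes exactly when one factor $|z_i-z_j|$ does, i.e. precisely when $z_i=z_j$ for some $i\neq j$; this simultaneously yields semidefiniteness. Hence the only substantial point is the simplicial inequality \eqref{tri}, and by Lemma \ref{lempos}(ii) it suffices to establish it for pairwise distinct $y,z_1,\ldots,z_n$.

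So I would fix pairwise distinct points and abbreviate $P=d_V(z_1,\ldots,z_n)>0$. Writing $w^{(k)}$ for the tuple obtained from $(z_1,\ldots,z_n)$ by replacing $z_k$ with $y$, I would first compute the ratio of each summand on the right-hand side of \eqref{tri} to $P$. Since the factors $|z_i-z_j|$ with $i,j\neq k$ occur in both $P$ and $d_V(w^{(k)})$, they cancel, leaving
\begin{equation*}
  \frac{d_V(w^{(k)})}{P} = \prod_{j \neq k} \frac{|y - z_j|}{|z_k - z_j|} = |\ell_k(y)|,
\end{equation*}
where $\ell_k(z)=\prod_{j\neq k}\frac{z-z_j}{z_k-z_j}$ is the $k$-th Lagrange basis polynomial for the nodes $z_1,\ldots,z_n$. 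Dividing \eqref{tri} by $P$, the claim thus reduces to the clean inequality $1 \le \sum_{k=1}^n |\ell_k(y)|$.

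The key observation---the one step that is not mere bookkeeping---is that the constant polynomial $1$ is reproduced exactly by Lagrange interpolation at any $n$ distinct nodes, so that $\sum_{k=1}^n \ell_k(z) \equiv 1$ identically in $z$; evaluating at $z=y$ and applying the triangle inequality gives
\begin{equation*}
  1 = \Big| \sum_{k=1}^n \ell_k(y) \Big| \le \sum_{k=1}^n |\ell_k(y)|.
\end{equation*}
Multiplying back by $P>0$ then yields \eqref{tri} for pairwise distinct points, which completes the argument; the restriction to $\R$ follows at once since $\R\subset\C$ and all three properties pass to subsets. I expect the main obstacle to be recognizing the Lagrange-interpolation structure hidden in the quotient $d_V(w^{(k)})/P$. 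Once the partition-of-unity identity $\sum_k \ell_k \equiv 1$ is in hand, the simplicial inequality is nothing more than the triangle inequality for complex numbers.
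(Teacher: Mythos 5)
Your proof is correct and is essentially the paper's own argument in different clothing: the Lagrange weights $\ell_k(y)$ are exactly the Cramer's-rule quotients $a_k=V(z_1,\ldots,z_{k-1},y,z_{k+1},\ldots,z_n)/V(z_1,\ldots,z_n)$ used in the paper, and your partition-of-unity identity $\sum_k\ell_k\equiv 1$ is precisely the last row of the Vandermonde linear system there. Both proofs reduce the simplicial inequality \eqref{tri} to the triangle inequality applied to this identity, so the content is the same.
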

\begin{proof} According to Proposition \ref{prop1.1}(i) it suffices to consider \eqref{Cprodn}
  in $\C$.
  By the definition \eqref{Cprodn} it is clear that $d_V$ is nonegative, semidefinite, and even definite. The symmmetry follows from
  \begin{align*}
    d_V(z_1,\ldots,z_n)^2 = \prod_{i \neq j}|z_i-z_j|= \prod_{\pi(i)\neq \pi(j)}
    |z_{\pi(i)}-z_{\pi(j)}|= d_V(z_{\pi(1)},\ldots,z_{\pi(n)})^2.
    \end{align*}
  By Lemma \ref{lempos}(ii) it is enough to show the simplicial inequality for pairwise different numbers $z_1,\ldots,z_n,y$. Then there is a unique
    solution $(a_1,\ldots,a_n)^{\top}\in \C^n$ to the linear system
    \begin{align*}
\begin{pmatrix}
    z_1^{n-1} & \cdots & z_n^{n-1} \\
    \vdots & \cdots & \vdots \\
    1 & \cdots & 1 \end{pmatrix} \begin{pmatrix}a_1 \\ \vdots \\ a_n
\end{pmatrix} = \begin{pmatrix} y^{n-1} \\ \vdots \\ 1 \end{pmatrix}.
    \end{align*}
    The solution is  given by Cramer's rule as follows
    \begin{align*}
  a_i = \frac{V(z_1,\ldots,z_{i-1},y,z_{i+1},\ldots,z_n)}{V(z_1,\ldots,z_n)},
  \quad i=1,\ldots,n,
    \end{align*}
    with the Vandermonde determinant from \eqref{CdefVandermond}.
    The proof is completed by the following estimate
    \begin{equation} \label{Vest}
    \begin{aligned}
     & d_V(z_1,\ldots,z_n) = |V(z_1,\ldots,z_n)\sum_{i=1}^n a_i|
       = |\sum_{i=1}^n V(z_1,\ldots,z_{i-1},y,z_{i+1},\ldots,n)|\\
      & \quad \le \sum_{i=1}^n |V(z_1,\ldots,z_{i-1},y,z_{i+1},\ldots,n)|
      = \sum_{i=1}^n d_V(z_1,\ldots,z_{i-1},y,z_{i+1},\ldots,n).
    \end{aligned}
    \end{equation}
\flushright \end{proof}
\begin{remark}
  It is remarkable that the proof yields the following more general
  inequality
  \begin{align*}
    |y|^k d_V(z_1,\ldots,z_n) \le \sum_{i=1}^n |z_i|^k d_V(z_1,\ldots,z_{i-1},y,z_{i+1},
    \ldots,z_n), \quad k=0,\ldots,n-1.
  \end{align*}
  Let us also note that a straightforward generalization of \eqref{Cprodn}
  to a product of norms fails in higher dimensional spaces; see Section
  \ref{sec6.0}. However, in Section \ref{sec6.0} we present an alternative generalization of the
  Vandermonde pseudo $n$-metrics.
\end{remark}
In case $n=3$ we investigate when the simplicial inequality holds with equality.
    \begin{proposition} \label{corcomplex}
      For the $3$-metric
    \begin{equation*} \label{Cprodn=3}
      d_V(z_1,z_2,z_3) = |z_1-z_2||z_2-z_3||z_1 -z_3|, \quad z_1,z_2,z_3\in \C
    \end{equation*}
   the equality
    \begin{align} \label{Csimpequality}
      d_V(z_1,z_2,z_3) = d_V(y,z_2,z_3)+d_V(z_1,y,z_3)+d_V(z_1,z_2,y)
    \end{align}
    holds with pairwise different numbers $y, z_1,z_2,z_3 \in \C$ if and only if
    the 
    quadruple $(y,z_1,z_2,z_3)$ belongs (up to a shift and a multiplication
    by a complex number and up to a permutation of $z_1,z_2,z_3$) to the following two parameter family $(q,s>0)$:
    \begin{equation} \label{Ceqfamily}
     y=0, \quad z_1=1, \quad z_2=\frac{1}{s}\Big(-1 + i \sqrt{q(1+s)}\Big),
      \quad z_3=\frac{1}{s}\Big(-1 - i \sqrt{q^{-1}(1+s)}\Big).
    \end{equation}
    Further, the equality \eqref{Csimpequality} holds with $|z_1-y|=|z_2-y|=
    |z_3-y|$  if and only if the triangle $z_1,z_2,z_3$ is equilateral. 
    \end{proposition}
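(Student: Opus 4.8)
The starting point is the chain \eqref{Vest}. For pairwise different $y,z_1,z_2,z_3$ every Vandermonde determinant there is nonzero, so with $W:=V(z_1,z_2,z_3)$ and $w_i:=V(z_1,\dots,z_{i-1},y,z_{i+1},\dots,z_n)$ the simplicial inequality reads $|W|=|w_1+w_2+w_3|\le|w_1|+|w_2|+|w_3|$, the first equality coming from $w_1+w_2+w_3=W$, i.e. from $\sum_i a_i=1$ with $a_i=w_i/W$. Since all $w_i\neq0$, equality in the complex triangle inequality holds if and only if $w_1,w_2,w_3$ lie on a common ray through the origin, that is
\[
a_1,a_2,a_3\in\R_{>0},\qquad a_1+a_2+a_3=1 .
\]
This reformulation is the backbone of the proof. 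Moreover each $a_i$ is a quotient of two Vandermonde determinants, hence invariant under $z\mapsto\alpha z+\beta$ ($\alpha\neq0$) and merely permuted under permutations of $z_1,z_2,z_3$, which justifies normalizing $y=0$, $z_1=1$.

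Writing $z_2=u$, $z_3=v$, a direct computation in the normalized coordinates gives
\[
a_1=\frac{uv}{(1-u)(1-v)},\qquad a_2=\frac{-v}{(1-u)(u-v)},\qquad a_3=\frac{u}{(1-v)(u-v)} .
\]
For the ``if'' direction I would insert \eqref{Ceqfamily}, set $\beta:=\sqrt{q(1+s)}$, $\gamma:=\sqrt{q^{-1}(1+s)}$, use $\beta\gamma=1+s$, and verify $a_1=\tfrac1{1+s}>0$ and $a_2,a_3>0$; by the reformulation, equality holds. The substantive part is the converse.

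For the converse I pass to $\sigma=u+v$, $\pi=uv$. Clearing denominators, the reality of $a_1$ becomes $\pi=\frac{a_1(1-\sigma)}{1-a_1}$, while the reality of $a_2,a_3$ (hence of $a_2a_3$) yields $(u-v)^2=\sigma^2-4\pi=-\frac{a_1}{a_2a_3}$. Because the right-hand side is a negative real, $u-v$ is purely imaginary, so $\Re u=\Re v$. Substituting the first relation into the second produces a quadratic in $\sigma$ with real coefficients; as $\sigma$ is generically non-real it is a root together with $\bar\sigma$, so $2\Re\sigma=-\frac{4a_1}{1-a_1}$ and therefore $\Re u=\Re v=-\frac{a_1}{1-a_1}=:-\tfrac1s$ with $s>0$ (note $0<a_1<1$). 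The modulus $|u-v|$ is pinned down by $-a_1/(a_2a_3)$, and splitting it into the two opposite imaginary parts of $u,v$ introduces the second parameter, which matching with \eqref{Ceqfamily} identifies as $q=\Im u/(-\Im v)$. Positivity of the $a_i$ fixes the signs (one point above, one below the real axis, with negative real part), and the choice of root / of which point is labelled $z_1$ is precisely the permutation freedom allowed in the statement. I expect this converse bookkeeping — solving for $\sigma,\pi$ and then extracting $q,s$ while tracking signs — to be the main obstacle; the degenerate sub-case $\sigma\in\R$ (forcing $v=\bar u$, i.e. $q=1$) falls outside the ``conjugate roots'' argument and must be treated by a separate short calculation, which again returns $\Re u=-1/s$.

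Finally, for the equilateral statement I impose in addition $|z_1-y|=|z_2-y|=|z_3-y|$, i.e. $|u|=|v|=1$ after normalization. Setting $|z_2|^2=\frac{1+q(1+s)}{s^2}=1$ and $|z_3|^2=\frac{1+q^{-1}(1+s)}{s^2}=1$ and subtracting forces $q=1$, after which $s^2-s-2=0$ gives $s=2$; thus $(z_1,z_2,z_3)=(1,e^{2\pi i/3},e^{-2\pi i/3})$ are the cube roots of unity, an equilateral triangle, with $y=0$ their common circumcenter. Conversely, every equilateral triangle is, up to a similarity, exactly this configuration with its circumcenter placed at $y$, so the already proven ``if'' direction yields equality. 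This establishes the claimed equivalence.
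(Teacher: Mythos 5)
Your backbone is exactly the paper's: equality in \eqref{Csimpequality} is reduced via \eqref{Vest} to the equality case of $|w_1+w_2+w_3|=|w_1|+|w_2|+|w_3|$ for the three nonzero Vandermonde values, i.e.\ to the Cramer ratios $a_i=w_i/W$ being positive reals summing to $1$, followed by the normalization $y=0$, $z_1=1$. Your formulas for $a_1,a_2,a_3$ are correct, as are the identity $a_2a_3=-a_1/(u-v)^2$ and the conclusions $\Re u=\Re v=-a_1/(1-a_1)=-1/s$ and $(\Im u-\Im v)^2=a_1/(a_2a_3)$; the ``if'' direction computation $a_1=1/(1+s)$ also checks out. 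Where you differ from the paper is only in how the resulting system is solved: the paper writes $w_2=c_2w_1$, $w_3=c_3w_1$ with $c_2,c_3>0$, subtracts the two equations to obtain $1=-c_2z_2-c_3z_3$, eliminates $z_3$, and solves a single quadratic in $z_2$ with explicit roots $z_2^{\pm}$. That elimination never splits into real and imaginary parts and treats your degenerate sub-case $\sigma\in\R$ (which is just $q=1$) uniformly with the rest, so it is the cleaner route to \eqref{Ceqfamily}.

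The gap is in the converse, at exactly the step you defer as ``bookkeeping''. Knowing $\Re u=\Re v=-1/s$ and $(\Im u-\Im v)^2$ does not determine $\Im u$ and $\Im v$ individually: you still need the real part of the relation $\pi=\frac{a_1(1-\sigma)}{1-a_1}$, which gives $\Im u\,\Im v=\Re u-(\Re u)^2=-(1+s)/s^2<0$, and only the pair (product, squared difference) together with the sign information coming from the positivity of $a_2$ and $a_3$ individually (not merely of their product, which is all you have used up to that point) pins $\{\Im u,\Im v\}$ down to $\{\beta/s,-\gamma/s\}$ with $\beta\gamma=1+s$ and $\beta/\gamma=q$. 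Even then there remains the residual ambiguity $(\Im u,\Im v)\mapsto(-\Im v,-\Im u)$, which must be identified with the swap of $z_2,z_3$ combined with $q\mapsto q^{-1}$ — the paper does this explicitly via $(z_2^-,z_3^+)(q^{-1},s)=(z_3^-,z_2^+)(q,s)$ (your phrase ``which point is labelled $z_1$'' points at the wrong permutation). I verified that all of these steps close, so the plan is sound, but as written the decisive part of the converse is asserted rather than derived, and the case $\sigma\in\R$ is left to ``a separate short calculation''. The equilateral statement at the end is handled correctly and agrees with the paper ($q=1$, $s=2$, third roots of unity).
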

    \begin{proof}
     From \eqref{Vest} we find that equality 
    holds in \eqref{tri} if and only if
    \begin{equation} \label{CeqV}
      \big| \sum_{i=1}^n V(z_1,\ldots,z_{i-1},y,z_{i+1},\ldots,z_n) \big|
      = \sum_{i=1}^n |V(z_1,\ldots,z_{i-1},y,z_{i+1},\ldots,z_n)|.
    \end{equation}
    Since the numbers are distinct we may shift $y$ to zero and multiply
    by a complex number such that $z_1=1$.
   Recall that the equality $|z_1|+|z_2|=|z_1+z_2|$ holds for $z_1,z_2\in \C$
    if and only if either $z_1=cz_2$ or $z_2=cz_1$ for some $c \ge 0$.
    More generally, the equality $|\sum_{i=1}^n z_i|=\sum_{i=1}^n |z_i|$
    holds if and only if there exists an index $j \in \{1,\ldots,n\}$
    and real numbers $c_i\ge 0$ such that $z_i = c_i z_j$ for $i=1,\ldots,n$.
    Moreover, if $z_i \neq 0$ for all $i=1,\ldots,n$ then the latter
    property holds for any $j\in\{1,\ldots,n\}$ with numbers $c_i>0$.
    We apply this to \eqref{CeqV} with $n=3$ after normalizing $y=0$
    and $z_1=1$. Thus the equality \eqref{Csimpequality} holds if and only
    if there are numbers $c_2,c_3>0$ such that
    \begin{equation*} \label{Csyst1}
      \begin{aligned}
        V(1,0,z_3)& = (-z_3)(1-z_3) = c_2 V(0,z_2,z_3)=c_2(-z_2)(-z_3)(z_2-z_3),\\     V(1,z_2,0)& = (1-z_2)z_2= c_3V(0,z_2,z_3)= c_3 (-z_2)(-z_3)(z_2-z_3).
      \end{aligned}
    \end{equation*}
    Since $z_2,z_3 \neq 0$ this is equivalent to the system
    \begin{equation*} \label{Csyst2}
      \begin{aligned}
        1-z_3& = - c_2z_2(z_2-z_3), \\
        1-z_2& = c_3 z_3(z_2-z_3).
      \end{aligned}
    \end{equation*}
    Subtract the second from the first equation and use $z_2 \neq z_3$
   to find the equivalent system
    \begin{equation} \label{Csyst3}
      \begin{aligned}
        1-z_3& = - c_2z_2(z_2-z_3), \\
        1& = -c_2z_2 -c_3 z_3. 
      \end{aligned}
    \end{equation}
    With the last equation we eliminate $z_3=-c_3^{-1}(1+c_2z_2)$ from
    the first  to obtain the quadratic equation
    \begin{align*}
      c_2(c_2+c_3)z_2^2+ 2 c_2 z_2 +c_3 + 1 =0.
    \end{align*}
    The solutions to \eqref{Csyst3} are then given by
    \begin{equation*}
      \begin{aligned}
        z_2^{\pm}&=\frac{1}{c_2+c_3}\Big(-1 \pm i \big( \frac{c_3}{c_2}(1+c_2+c_3)\big)^{1/2}\Big),\\
        z_3^{\mp}&
        =\frac{1}{c_2+c_3}\Big(-1 \mp i \big( \frac{c_2}{c_3}(1+c_2+c_3)\big)^{1/2}\Big).
      \end{aligned}
    \end{equation*}
    Introducing the parameters $s=c_2+c_3 >0$ and $q=\frac{c_3}{c_2}>0$
    we may write this as
    \begin{equation*}
      z_2^{\pm}(q,s)=\frac{1}{s}\Big(-1 \pm i \sqrt{ q(1+s)}\Big),
      \quad
        z_3^{\mp}(q,s)
        =\frac{1}{s}\Big(-1 \mp i \sqrt{ q^{-1}(1+s)}\Big).
    \end{equation*}
    Finally, we observe the relation
    \begin{align*}
      (z_2^{-},z_3^+)(q^{-1},s)=(z_3^-,z_2^+)(q,s), \quad q,s>0,
    \end{align*}
    where the latter is a permutation of $(z_2^+,z_3^-)(q,s)$.
    Hence the family \eqref{Ceqfamily} covers all solutions up to permutation.

    For the normalized coordinates $y=0, z_1=1$ the condition $1=|z_2|=|z_3|$
    holds if and only if $s=2,q=1$. For these values the numbers
    $z_2=\frac{1}{2}(-1 + i \sqrt{3})$, $z_3=\frac{1}{2}(-1 - i \sqrt{3})$
      are the third roots of unity which belong to the equilateral case.
    \flushright \end{proof}
    \begin{remark} A simple consequence of Theorem \ref{Cprop} and
      Proposition \ref{corcomplex} for $y=0$, $|z_1|=|z_2|=|z_3|=1$ is the following elementary statement: The sides $a,b,c$ of a triangle
      with vertices on the unit circle satisfy the inequality
      \begin{align*}
        abc \le a+b + c.
        \end{align*}
      Equality holds if and only if the triangle is equilateral.
      Neither did we find a reference to this fact in the literature nor an elementary
      geometric proof.
    \end{remark}
    Combining Proposition \ref{prop1.1} and Theorem \ref{Cprop} leads to
    the following examples of pseudo $n$-metrics.
    \begin{example} \label{ex1}
      Let $\|\cdot\|$ be any monotone norm in $\R^k$. Then
      \begin{align*}
        d_k(x_1,\ldots,x_n)= \left\| \begin{pmatrix} d_V(x_{1,1},\ldots,x_{n,1})\\
          \vdots \\ d_V(x_{1,k},\ldots,x_{n,k}) \end{pmatrix} \right\|,
        \quad x_j = (x_{j,i})_{i=1}^k \in \R^k
      \end{align*}
      defines a pseudo $n$-metric in $\R^k$. Note that $d_k$, $k \ge 2$ is
      not definite although $d_V$ is.
    \end{example}
    \begin{example} \label{ex2}
      Let $(\Omega,\cG,\mu)$ be  a bounded measure space. We apply Proposition
      \ref{prop1.1}
      to  $Z=L^p(\Omega,\cG;\R)$ with the monotone norm
      $\|\cdot\|_{L^p}$  and to $\cF = L^r(\Omega,\cG;\R)$ with  $2r \ge n(n-1)p$.
      Then the expression
\begin{align*}
  d_{L^r}(f_1,\ldots,f_n)& = \Big( \int_{\Omega} \prod_{1\le i < j \le n}|f_i- f_j|^p
  \, \mathrm{d}\mu \Big)^{1/p}.
\end{align*}
defines a pseudo $n$-metric on $L^r(\Omega,\cG;\R)$. Note that $2r \ge n(n-1)p$ guarantees
that the property \eqref{Fprop} holds.
      \end{example}
\section{Examples in linear spaces}
\label{s:lin}
\subsection{Pseudo $n$-norms}
In  vector spaces it is natural to first define 
a pseudo $n$-norm and then define a pseudo $n$-metric by taking the
$n$-norm of differences.
\begin{definition} \label{defpseudonorm}
  Let $X$ be a vector space. A map $\|\cdot \|:X^n \to \R$ is called
  a pseudo $n$-norm if it has the following properties:
  \begin{itemize}
  \item[(i)] (Semidefiniteness) \\
    If $x_i $, $i\in \{1,\ldots,n\}$ are linearly dependent
    then $\|(x_1,\ldots,x_n)\|=0$.
  \item[(ii)] (Symmetry) \\
    For all $x=(x_1,\ldots,x_n) \in X^n$ and for all $\pi \in S_n$
    \begin{equation} \label{normperm}
      \|(x_1,\ldots,x_n)\|= \|(x_{\pi(1)},\ldots,x_{\pi(n)}) \|.
    \end{equation}
  \item[(iii)] (Positive homogeneity) \\
    For all $x=(x_1,\ldots,x_n) \in X^n$ and for all
    $\lambda \in  \R$
    \begin{equation} \label{normhom}
      \|(\lambda x_1,\ldots,x_n)\|=
      |\lambda| \, \|(x_{1},\ldots,x_{n}) \|.
    \end{equation}
  \item[(iv)] (Multi-sublinearity) \\For all $x=(x_1,\ldots,x_n) \in X^n$ and
    $y \in X$,
    \begin{equation} \label{trinorm}
      \|(x_1+y,\ldots,x_n)\| \le \|(x_1,x_2,\ldots,x_n)\| +
      \|(y,x_2,\ldots,x_n)\|.
    \end{equation}
  \end{itemize}
  A pseudo $n$-norm is called an $n$-norm if $\|(x_1,\ldots,x_n)\|=0$
  holds if and only if $x_i,i=1,\ldots,n$ are linearly dependent.
\end{definition}
Clearly, a pseudo $1$-norm is a semi-norm in the usual sense. Also note
that the positive homogeneity \eqref{normhom} and the sublinearity \eqref{trinorm} transfers from the first component to all other components via \eqref{normperm}.

Next consider a matrix $A\in \R^{n,n}$ and the induced
linear map $A_X:=A^{\top}\otimes I_X$ on $X^n$ given by
  \begin{equation} \label{indmat}
    \begin{aligned}
      A_X \begin{pmatrix} x_1 \\ \vdots \\ x_n \end{pmatrix}& =
      \begin{pmatrix} A_{11}I_X & \cdots & A_{1n} I_X \\
        \vdots & & \vdots \\
        A_{n1}I_X & \cdots & A_{nn} I_X \end{pmatrix}
      \begin{pmatrix} x_1 \\ \vdots \\ x_n \end{pmatrix} \in X^n \\
      ( A_X (x_1,\ldots,x_n))_j &= \sum_{i=1}^n A_{ij}x_i, \quad j=1,\ldots,n.
    \end{aligned}
      \end{equation}

  \begin{lemma}\label{lemdet} Let $\|\cdot\|$ be a pseudo $n$-norm on $X$
    and $A \in \R^{n,n}$.
  Then the map \eqref{indmat} satisfies  
  \begin{equation} \label{detrule}
    \| A_X(x_1,\ldots,x_n)\| = |\det(A)|\,  \|(x_1,\ldots,x_n)\|
    \quad \forall (x_1,\ldots,x_n) \in X^n.
  \end{equation}
\end{lemma}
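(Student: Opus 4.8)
The plan is to read \eqref{detrule} as a statement about how the pseudo $n$-norm transforms under the column operations encoded by $A$, and to reduce a general $A$ to a product of elementary matrices. Two observations drive the reduction. First, a direct computation from \eqref{indmat} gives the composition rule $A_X\circ B_X=(BA)_X$, so the set $\mathcal{G}$ of matrices $M$ for which \eqref{detrule} holds is closed under products: if $A,B\in\mathcal{G}$ then
\[
\|(BA)_X(x)\|=\|A_X(B_X(x))\|=|\det A|\,\|B_X(x)\|=|\det A|\,|\det B|\,\|x\|=|\det(BA)|\,\|x\|,
\]
using only $|\det(BA)|=|\det A|\,|\det B|$. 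Second, the elementary matrices generate $GL_n(\R)$. Hence it suffices to verify \eqref{detrule} for each of the three elementary types and to treat the singular case separately.

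The singular case follows at once from semidefiniteness. If $\det A=0$ the columns of $A$ are linearly dependent, say $\sum_j c_j A_{ij}=0$ for all $i$ with $(c_j)\neq 0$. Writing $y_j=(A_X(x))_j=\sum_i A_{ij}x_i$ one gets $\sum_j c_j y_j=\sum_i\big(\sum_j c_j A_{ij}\big)x_i=0$, so the components $y_1,\ldots,y_n$ of $A_X(x)$ are linearly dependent and therefore $\|A_X(x)\|=0=|\det A|\,\|x\|$ by Definition \ref{defpseudonorm}(i). For the remaining (invertible) case, a permutation matrix sends $(x_1,\ldots,x_n)$ to a reordering of itself, so \eqref{detrule} is precisely the symmetry \eqref{normperm} together with $|\det|=1$; a diagonal scaling of one coordinate by $\lambda$ is covered by positive homogeneity \eqref{normhom} (applied to the relevant slot via \eqref{normperm}), matching $|\det|=|\lambda|$.

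The transvection is the crux, and I expect it to be the main obstacle, precisely because a pseudo $n$-norm is only sublinear, not multilinear, so one cannot expand the way one does for a determinant. The matrix to analyze is $A=I+\mu E_{\ell k}$ (a single off-diagonal entry $\mu$ in position $(\ell,k)$, with $\ell\neq k$ and $\det A=1$), whose effect is the replacement $x_k\mapsto x_k+\mu x_\ell$ in a single slot. Applying multi-sublinearity \eqref{trinorm} in the $k$-th slot splits the norm as
\[
\|(\ldots,x_k+\mu x_\ell,\ldots)\|\le\|(\ldots,x_k,\ldots)\|+\|(\ldots,\mu x_\ell,\ldots)\|,
\]
and the second tuple contains both the entry $\mu x_\ell$ and the untouched entry $x_\ell$, hence has linearly dependent components and vanishes by semidefiniteness. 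This yields the inequality ``$\le$''. The reverse inequality comes from the same step applied to the decomposition $x_k=(x_k+\mu x_\ell)+(-\mu)x_\ell$, where again the spurious term contains two multiples of $x_\ell$ and drops out. Equality then establishes \eqref{detrule} for transvections.

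Combining the three elementary types through the multiplicativity of $\mathcal{G}$ proves \eqref{detrule} for every invertible $A$, and together with the singular case this covers all $A\in\R^{n,n}$. The only genuinely delicate point is the two-sided argument for the transvection; everything else is a bookkeeping reduction, and the semidefiniteness axiom does double duty, both killing the cross term in the shear estimate and handling the singular matrices outright.
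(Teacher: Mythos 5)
Your proof is correct and follows essentially the same strategy as the paper's: dispose of singular $A$ via semidefiniteness, reduce invertible $A$ to elementary factors (the paper uses a $PLDU$ decomposition rather than a product of elementary matrices, a cosmetic difference), and handle the shear/transvection by the identical two-sided sublinearity argument in which the cross term dies by semidefiniteness. Your explicit statement of the composition rule $A_X\circ B_X=(BA)_X$ and the resulting multiplicativity is a nice touch that the paper leaves implicit.
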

\begin{remark} The result shows that the pseudo $n$-norm is a 
  volume form on $X^n$.
\end{remark}
\begin{proof} If $A$ is singular then the vectors $(A_X(x_1,\ldots,x_n))_j$,
  $j=1,\ldots,n$ are linearly dependent, hence formula \eqref{detrule}
  is trivially satisfied. Otherwise there exists a decomposition
  $A=P L D U$ where $P$ is  an $n\times n$ permutation matrix, $L\in \R^{n,n}$
  resp. $U$ 
  is a lower resp. upper triangular $n \times n$-matrix with ones on the diagonal and $D\in \R^{n,n}$ is diagonal.
  Since $A_X =U_X D_X L_X P_X$ it suffices to show that all $4$ types of
  matrices satisfy the formula \eqref{detrule}. For $P_X$ this follows from
  \eqref{normperm} and for $D_X$ from \eqref{normhom}.
  For $L_X$ note that $L_X(x_1,\ldots,x_n)$ is of the form
  \begin{align*}
    L_X(x_1,\ldots,x_n) = ( \star, x_{n-1}+L_{n,n-1}x_n,x_n).
    \end{align*}
  With \eqref{trinorm} and Definition \ref{defpseudonorm}(i) we obtain
  \begin{align*}
    \| ( \star, x_{n-1}+L_{n,n-1}x_n,x_n)\|& \le \|( \star, x_{n-1},x_n)\|
    +\|( \star, L_{n,n-1} x_n,x_n)\| \\
    &=\|( \star, x_{n-1},x_n)\| .
  \end{align*}
  Similarly,
  \begin{align*}
    \|( \star, x_{n-1},x_n)\|& \le 
    \| ( \star, x_{n-1}+L_{n,n-1}x_n,x_n)\|
    +\|( \star,- L_{n,n-1} x_n,x_n)\| \\
    &= \| ( \star, x_{n-1}+L_{n,n-1}x_n,x_n)\|,
  \end{align*}
  hence we have
  \begin{align*}
    \|( \star, x_{n-1},x_n)\|=\| ( \star, x_{n-1}+L_{n,n-1}x_n,x_n)\|.
  \end{align*}
  By induction one eliminates all terms $L_{i,j}x_i,i>j$ from the pseudo
  $n$-norm. Since $\det(L)=1$ this proves \eqref{detrule} for $L$.
  In a similar manner, one shows
  \begin{align*}
    \|R_X(x_1,\ldots,x_n)\|&= \|(x_1,x_2 + R_{12}x_1,\star)\|
    = \|(x_1,x_2,\star)\|
  \end{align*}
  and finds that \eqref{detrule} is also satisfied for the map $R_X$.
  
    \flushright \end{proof}

\begin{proposition} \label{normtometric}
  Let $\| \cdot\|$ be a pseudo $(n-1)$-norm on a vector space $X$. Then
  \begin{align*}
    d(x_1,\ldots,x_n)= \|(x_2-x_1,\ldots,x_n-x_1)\|
  \end{align*}
  defines a pseudo $n$-metric on $X$.
\end{proposition}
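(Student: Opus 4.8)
The plan is to verify the three defining properties of a pseudo $n$-metric for the map $d(x_1,\ldots,x_n)=\|(x_2-x_1,\ldots,x_n-x_1)\|$, treating semidefiniteness, symmetry, and the simplicial inequality in turn. Throughout I abbreviate $v_j=x_j-x_1$ for $j=2,\ldots,n$, so that $d(x_1,\ldots,x_n)=\|(v_2,\ldots,v_n)\|$, and I rely on the pseudo $(n-1)$-norm axioms in Definition \ref{defpseudonorm} together with the determinant rule of Lemma \ref{lemdet}.

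Semidefiniteness is immediate. If $x_i=x_j$ with $i\neq j$, then either one of the two indices equals $1$, in which case the difference $x_j-x_1$ (or $x_i-x_1$) vanishes, or both indices exceed $1$, in which case $x_i-x_1=x_j-x_1$. In either case the $n-1$ vectors $v_2,\ldots,v_n$ are linearly dependent, so Definition \ref{defpseudonorm}(i) forces $d=0$.

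Symmetry is the main obstacle, because the definition of $d$ singles out $x_1$ as a base point, so it is not evident that $d$ is invariant under all of $\mathcal P_n$. I plan to split the symmetric group: permutations fixing the index $1$ merely permute $v_2,\ldots,v_n$ and leave $d$ invariant by \eqref{normperm}, while the transposition $(1\,2)$ replaces the tuple $(v_2,v_3,\ldots,v_n)$ by $(-v_2,\,v_3-v_2,\ldots,v_n-v_2)$, since $x_1-x_2=-v_2$ and $x_j-x_2=v_j-v_2$ for $j\ge 3$. This substitution is precisely the induced map $A_X$ of the matrix $A\in\R^{n-1,n-1}$ that is upper triangular with diagonal $(-1,1,\ldots,1)$, so $|\det A|=1$ and Lemma \ref{lemdet} yields equality of the two norms. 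Since the permutations fixing $1$ together with $(1\,2)$ generate $\mathcal P_n$ (conjugating $(1\,2)$ by a permutation of $\{2,\ldots,n\}$ produces every transposition $(1\,k)$, and these generate the whole group), symmetry of $d$ follows.

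For the simplicial inequality I would first invoke the symmetry just established to write every term with base point $x_1$. Setting $w=y-x_1$, the term carrying $y$ in position $i\ge 2$ equals $\|(v_2,\ldots,v_{i-1},w,v_{i+1},\ldots,v_n)\|$, while the term carrying $y$ in position $1$ equals $\|(v_2-w,\ldots,v_n-w)\|$. Writing $m=n-1$ and $u_k=v_{k+1}$, the claim reduces to the purely norm-theoretic inequality
\[
\|(u_1,\ldots,u_m)\| \le \|(u_1-w,\ldots,u_m-w)\| + \sum_{k=1}^m \|(u_1,\ldots,u_{k-1},w,u_{k+1},\ldots,u_m)\|,
\]
which I plan to prove by telescoping. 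At the $k$-th step I split $u_k=(u_k-w)+w$ and apply multi-sublinearity \eqref{trinorm} to peel off the summand $\|(u_1-w,\ldots,u_{k-1}-w,w,u_{k+1},\ldots,u_m)\|$; because this summand carries $w$ in position $k$, adding multiples of $w$ to the earlier positions is a unipotent shear of determinant $1$, so by Lemma \ref{lemdet} it equals the $k$-th term $\|(u_1,\ldots,u_{k-1},w,u_{k+1},\ldots,u_m)\|$. After $m$ steps the residual term is exactly $\|(u_1-w,\ldots,u_m-w)\|$, which completes the estimate. The only point requiring care is the bookkeeping of these shear normalizations, every one of which is licensed by Lemma \ref{lemdet}.
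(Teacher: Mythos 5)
Your proof is correct and follows essentially the same route as the paper: semidefiniteness from linear dependence of the differences, symmetry by realizing a transposition involving the base point as an induced map $A_X$ with $|\det A|=1$ and invoking Lemma \ref{lemdet}, and the simplicial inequality by telescoping with multi-sublinearity plus determinant-one shear normalizations. The only (cosmetic) differences are that you generate $\cP_n$ from $(1\,2)$ and the stabilizer of $1$ rather than treating each transposition $(1\,j)$ directly, and your shears renormalize the peeled-off terms to base point $x_1$ while the paper renormalizes the residual toward base point $y$.
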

\begin{proof}
  Let $x_i=x_j$ for some $i,j \in \{1,\ldots,n\}$, $i \neq j$.
  Then the vectors $x_{\nu}-x_1,\nu=2,\ldots,n$
  are linearly dependent, hence $d(x_1,\ldots,x_n)=0$ follows
  from  condition (i) of  Definition \ref{defpseudonorm}.
  Since $\cP_n$ is generated by transpositions, it is enough to
  show \eqref{perm} when transposing $x_1$ with some $x_j$,  $j \in \{2,\ldots,n\}$. For this purpose consider the matrix with $-1$ in the $j$-th row
  \begin{align*}
    A= \begin{pmatrix} 1& 0 & \cdots & \cdots & 0 \\
     \vdots & \ddots & & &\vdots  \\
      -1& -1 & -1 & -1 & -1 \\
     \vdots  & & & \ddots & \vdots \\
      0 & \cdots & \cdots & 0 & 1
    \end{pmatrix} \in \R^{n-1,n-1},
  \end{align*}
  which satisfies $\det(A)=-1$ and
  \begin{align*}
    & A_X (x_2-x_1, \ldots, x_n-x_1)\\
    &= (x_2-x_j, \ldots, x_{j-1}-x_j,x_1-x_j,
    x_{j+1}-x_j, \ldots,x_n -x_j).
  \end{align*}
  Taking norms and using \eqref{normperm} and \eqref{detrule} leads to
  \begin{align*}
    d(x_1,\ldots,x_n)=d(x_j,x_1,x_2,\ldots,x_{j-1},x_{j+1},\ldots,x_n).
  \end{align*}
  For the simplicial inequality \eqref{tri} we use the multi-sublinearity
  \eqref{trinorm}
  \begin{align*}
    d(x_1,\ldots,x_n) & = \|(x_2-y+y-x_1,\ldots, x_n-y + y -x_1) \|\\
    &\le   \|(x_2-y,x_3-x_1,\ldots,x_n-x_1)\| +
    \|(y-x_1,x_3-x_1,\ldots,x_n -x_1)\|\\
    & \le  \|(x_2-y,x_3-x_1,\ldots,x_n-x_1)\| + \|(y-x_1,x_3-y,\ldots,x_n -y)\|.
  \end{align*}
  For the  last term we used \eqref{detrule} and
  \begin{align*}
    (y-x_1,x_3-y,\ldots,x_n - y)& = U_X (y-x_1,x_3-x_1,\ldots,x_n-x_1)
  \end{align*}
  for the upper triangular matrix
  \begin{align*}
     U= \begin{pmatrix} 1 & -1 & \cdots & -1 \\ 0 & 1 &0 & 0 \\
      \vdots & &  \ddots & \vdots \\
      0 &  & \cdots & 1 \end{pmatrix}.
    \end{align*}
  Thus we have proved for $j=2$ the following assertion
  \begin{equation} \label{indtri}
    \begin{aligned}
      d(x_1,\ldots,x_n) &\le \sum_{i=2}^j d(y,x_1,\ldots,x_{i-1},x_{i+1},\ldots,x_n) \\
     & + \|(x_2-y,\ldots,x_j-y,x_{j+1}-x_1,\ldots,x_n -x_1)\|.
    \end{aligned}
  \end{equation}
  In the induction step we estimate the last term further by splitting
  $x_{j+1}-x_1 =x_{j+1}-y + y -x_1$:
  \begin{align*}
    & \|(x_2-y,\ldots,x_j-y,x_{j+1}-x_1,\ldots,x_n -x_1)\|\\
    & \le
    \|(x_2-y,\ldots,x_j-y,x_{j+1}-y,x_{j+2}-x_1,\ldots,x_n -x_1)\|\\
    & +\|(x_2-y,\ldots,x_j-y,y-x_1,x_{j+2}-x_1,\ldots,x_n -x_1)\|.
  \end{align*}
  As in the first step we can replace all vectors $x_i-x_1$, $i \ge j+2$
  in the last term by $x_i-y$.
  This proves that \eqref{indtri} holds for $j+1$. For $j=n$
  equation \eqref{indtri} reads
  \begin{align*}
   d(x_1,\ldots,x_n) &\le \sum_{i=2}^n d(y,x_1,\ldots,x_{i-1},x_{i+1},\ldots,x_n) \\
   & + \|(x_2-y,\ldots,x_n -y)\|= \sum_{i=1}^n d(y,x_1,\ldots,x_{i-1},x_{i+1},\ldots,x_n).
   \end{align*}

  \flushright \end{proof}

\subsection{Construction via exterior products}

The previous results suggest to use exterior products in defining appropriate
pseudo $n$-norms. For this purpose recall the following calculus
for a separable reflexive Banach space $X$; see \cite[Ch.V]{T97}, \cite[Section 6]{AMR88}, \cite[Ch.3.2.3]{A1998}.
The linear space $\bigwedge^k(X)$ is the set of continuous alternating 
$k$-linear forms on the dual $X^{\star}$, using the identification of
$X$ and its bidual $X^{\star \star}$. $\bigwedge^kX$ is called the $k$-fold exterior
product of $X$. Elements of $\bigwedge^k(X)$ are exterior products
$x_1 \wedge \ldots \wedge x_k$ defined for $x_1,\ldots,x_k\in X$
by the dual pairing
\begin{align*}
  \langle x_1 \wedge \ldots \wedge x_k, (f_1,\ldots,f_k) \rangle
  = \det \left( \langle f_i , x_j \rangle_{i,j=1}^k \right)
\end{align*}
where $f_1,\ldots,f_k \in X^{\star}$ and $\langle \cdot, \cdot \rangle$
is the dual pairing of $X^{\star}$ and $X$.
As usual we write in the following
\begin{align*}
  \bigwedge_{i=1}^k x_i = x_1\wedge \ldots \wedge x_k.
\end{align*}
By linearity one can extend exterior products to sums
\begin{align*}
  \sum_{j=1}^J c_j (x_1^j\wedge \ldots \wedge x_k^j), \quad c_j \in \R,
  x_i^j \in X,i=1,\ldots,k, j=1,\ldots,J \in \N.
\end{align*}
Closing the linear hull with respect to the norm
\begin{align*}
  \|\Phi\|_{\wedge}= \sup\{ | \Phi(f_1,\ldots,f_k)|: f_j \in X^{\star}, \|f_j\|_{X^{\star}}=1,j=1,\ldots,k \}, \quad \Phi\in {\bigwedge} ^k(X)
\end{align*}
turns $\bigwedge^k(X)$ into a Banach space.

The following lemma is elementary (see \cite[Lemma 3.2.6]{A1998})
\begin{lemma} \label{lemext}
  Let $\bigwedge^k X$ be the $k$-fold exterior product of a separable reflexive Banach space
 space $X$ with $k\le \dim(X)$. Then the following holds
  \begin{itemize}
  \item[(i)] If $X$ is $m$-dimensional ($m\ge k$) and $e_1,\ldots,e_m$ is a basis
    of $X$ then
    \begin{align*}
      \{e_{i_1}\wedge \ldots \wedge e_{i_k}: 1 \le i_1 < i_2 \ldots < i_k \le m\}
    \end{align*}
    is a basis of $\bigwedge^k X$. In particular $\bigwedge^k X$
    has dimension $m \choose k$.
  \item[(ii)] One has $x_1\wedge \ldots \wedge x_k=0$ if and only if
    $x_1,\ldots,x_k$ are linearly dependent.
  \item[(iii)] If $X$ is a Hilbert space with inner product $\langle \cdot, \cdot \rangle$  then the bilinear and continuous extension of
    \begin{equation} \label{eqinpro}
      \langle x_1\wedge \ldots \wedge x_k,y_1\wedge \ldots \wedge y_k \rangle
      = \det\big( \langle x_i,y_j \rangle_{i,j=1}^k\big) 
    \end{equation}
    defines an inner product on the Hilbert space $\bigwedge^k X$ . In particular,
    the corresponding norm
    \begin{equation} \label{eqvol}
      \| x_1 \wedge \ldots \wedge x_k \|_{\wedge} =
      \big(\det( \langle x_i,x_j \rangle_{i,j=1}^k) \big)^{1/2}
    \end{equation}
    is the volume of the $k$-dimensional parallelepiped spanned by
    $x_1,\ldots,x_k$.  Further, the generalized Hadamard  inequality holds
    for $j=1,\ldots,k$
    \begin{equation} \label{eqhad}
      \|x_1 \wedge \ldots \wedge x_k\|_{\wedge} \le \|x_1 \wedge \ldots
      \wedge x_j\|_{\wedge}
      \|x_{j+1} \wedge \ldots \wedge x_k\|_{\wedge}.
    \end{equation}
    
  \end{itemize}
\end{lemma}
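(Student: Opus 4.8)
The plan is to reduce each assertion to finite-dimensional multilinear algebra inside the span of the vectors at hand, exploiting the defining dual pairing of $\bigwedge^k(X)$. For (i) I would first show the ordered wedges span: writing $x_j = \sum_{i=1}^m c_{ji} e_i$ and expanding $x_1 \wedge \ldots \wedge x_k$ by multilinearity, every term containing a repeated index vanishes by the alternating property, and the survivors collapse (with the sign of the reordering permutation) onto the $\binom{m}{k}$ strictly ordered wedges $e_{i_1}\wedge \ldots \wedge e_{i_k}$. For independence I would test against the dual basis $e_1^{\star},\ldots,e_m^{\star}$: the defining formula gives
\[
  \langle e_{i_1}\wedge \ldots \wedge e_{i_k},(e_{j_1}^{\star},\ldots,e_{j_k}^{\star})\rangle
  = \det\big(\langle e_{j_a}^{\star},e_{i_b}\rangle\big)_{a,b=1}^{k} = \delta_{I,J}
\]
for strictly ordered multi-indices $I=(i_1,\ldots,i_k)$, $J=(j_1,\ldots,j_k)$ (a zero column appears whenever $I\neq J$). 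Hence the proposed basis elements are separated by these functionals, so any vanishing combination has all coefficients zero.

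For (ii), if $x_1,\ldots,x_k$ are linearly dependent, say $x_k=\sum_{j<k}\lambda_j x_j$, then multilinearity together with the vanishing of any wedge having a repeated factor gives $x_1\wedge\ldots\wedge x_k=0$. Conversely, if they are linearly independent I would use Hahn--Banach to choose $f_1,\ldots,f_k\in X^{\star}$ with $\langle f_i,x_j\rangle=\delta_{ij}$, so that $\langle x_1\wedge\ldots\wedge x_k,(f_1,\ldots,f_k)\rangle=\det(\delta_{ij})=1\neq0$ and the wedge is nonzero.

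For (iii), I would first verify that \eqref{eqinpro} is well defined: the right-hand side $\det(\langle x_i,y_j\rangle)$ is multilinear and alternating in the $x_i$ and in the $y_j$, which are precisely the relations defining $\bigwedge^k X$, so the formula extends to a well-defined symmetric bilinear form. Positive definiteness I would read off an orthonormal basis $e_1,\ldots,e_m$: then $\langle e_I,e_J\rangle=\delta_{I,J}$ on ordered multi-indices, so the basis of (i) is orthonormal and the form is an inner product. The volume interpretation \eqref{eqvol} is the classical Gram-determinant identity; applying Gram--Schmidt to $x_1,\ldots,x_k$ turns the Gram matrix into $R^{\top}R$ with $R$ upper triangular, whose determinant is the product of the successive orthogonal heights.

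The main obstacle is the generalized Hadamard inequality \eqref{eqhad}, which I would obtain from Fischer's inequality for positive semidefinite block matrices. Writing the Gram matrix $G=(\langle x_i,x_j\rangle)_{i,j=1}^k$ in the block form
\[
  G=\begin{pmatrix} G_{11} & G_{12} \\ G_{12}^{\top} & G_{22}\end{pmatrix},
\]
with $G_{11}$ the Gram matrix of $x_1,\ldots,x_j$ and $G_{22}$ that of $x_{j+1},\ldots,x_k$, Fischer's inequality yields $\det(G)\le\det(G_{11})\det(G_{22})$; taking square roots and using \eqref{eqvol} gives \eqref{eqhad}. Beyond this, the only points needing care are the well-definedness step above and confirming that the Hahn--Banach and dual-basis arguments legitimately reduce the Banach-space statements in (i)--(ii) to the finite-dimensional span of the vectors involved.
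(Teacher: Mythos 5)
Your proposal is correct. Note, however, that the paper does not prove this lemma at all: it declares it ``elementary'' and refers to \cite[Lemma 3.2.6]{A1998}, so there is no in-paper argument to compare against. Your self-contained proof is sound and follows the standard route: expansion plus the alternating property for spanning in (i), biorthogonal functionals (obtained by Hahn--Banach from the finite-dimensional span) for linear independence in (i) and for the nonvanishing direction of (ii), the Gram-determinant identity for \eqref{eqvol}, and Fischer's inequality $\det G \le \det G_{11}\det G_{22}$ for positive semidefinite block matrices to get \eqref{eqhad} after taking square roots. One small presentational point: in the paper's setup $\bigwedge^k(X)$ is defined as a space of alternating forms on $X^{\star}$, not as a quotient of a tensor power, so the cleanest way to see that \eqref{eqinpro} is well defined is to observe that under the Riesz identification $X^{\star}\cong X$ the right-hand side $\det(\langle x_i,y_j\rangle)$ is exactly the evaluation of the form $x_1\wedge\ldots\wedge x_k$ at the functionals corresponding to $y_1,\ldots,y_k$; well-definedness is then automatic rather than something to be checked against defining relations. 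With that reading, your argument is complete.
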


\begin{proposition} \label{propcons}
  \begin{itemize}
  \item[(i)] Let $\bigwedge^nX$ be the $n$-fold exterior product of a separable reflexive Banach space
    $X$ and let $\|\cdot\|$ be a norm in $\bigwedge^nX$. Then
    \begin{equation} \label{defnnorm}
      \| (x_1,\ldots,x_n)\|= \big\| \bigwedge_{i=1}^n x_i \big\|
    \end{equation}
    defines an $n$-norm on $X$.
  \item[(ii)]
      Let $\bigwedge^{n-1} X$ be the $(n-1)$-fold exterior product of a
  Banach space $X$ and let $\| \cdot\|$ be a norm in $\bigwedge^{n-1} X$.
  Then
  \begin{equation*} \label{eqdefnm}
    d(x_1,\ldots,x_n)=\big \| \bigwedge_{i=2}^{n}(x_i-x_1)\big\|, \quad
    x=(x_1,\ldots,x_n) \in X^n
  \end{equation*}
  defines a pseudo $n$-metric on $X$. One has $d(x_1,\ldots,x_n)=0$ if and
  only if $ x_i-x_1, i=2,\ldots,n$ are linearly dependent.
  \end{itemize}
\end{proposition}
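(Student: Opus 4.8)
The plan is to derive both parts from the multilinear and alternating structure of the exterior product together with Lemma \ref{lemext}(ii), reducing part (ii) to part (i) via Proposition \ref{normtometric}. The essential observation is that, by the defining dual pairing $\langle x_1 \wedge \ldots \wedge x_k, (f_1,\ldots,f_k)\rangle = \det(\langle f_i, x_j\rangle)$, the map $(x_1,\ldots,x_k) \mapsto \bigwedge_{i=1}^k x_i$ inherits from the determinant both multilinearity in each argument $x_j$ and the alternating (skew-symmetric) property under interchange of two arguments. These two facts, together with the axioms of the given norm $\|\cdot\|$, are all that part (i) requires.

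For part (i), I would verify the four conditions of Definition \ref{defpseudonorm} in turn. Semidefiniteness is immediate: if $x_1,\ldots,x_n$ are linearly dependent then $\bigwedge_{i=1}^n x_i = 0$ by Lemma \ref{lemext}(ii), hence $\|(x_1,\ldots,x_n)\| = 0$. For symmetry, a transposition of two arguments multiplies the exterior product by $-1$ by the alternating property, and since $\|\cdot\|$ is a norm this sign is absorbed; as $\cP_n$ is generated by transpositions, \eqref{normperm} follows. Positive homogeneity is exactly the absolute homogeneity of $\|\cdot\|$ applied to the identity $(\lambda x_1) \wedge x_2 \wedge \ldots \wedge x_n = \lambda\,(x_1 \wedge \ldots \wedge x_n)$. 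Multi-sublinearity \eqref{trinorm} follows by expanding $(x_1 + y)\wedge x_2 \wedge \ldots \wedge x_n = \bigwedge_{i=1}^n x_i + y \wedge x_2 \wedge \ldots \wedge x_n$ (additivity in the first slot) and then applying the triangle inequality of $\|\cdot\|$. Finally, because $\|\cdot\|$ is a genuine norm, $\|(x_1,\ldots,x_n)\| = 0$ holds exactly when $\bigwedge_{i=1}^n x_i = 0$, which by Lemma \ref{lemext}(ii) is equivalent to linear dependence; this upgrades the pseudo $n$-norm to an $n$-norm.

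For part (ii), I would apply part (i) with $n$ replaced by $n-1$ to conclude that $(x_1,\ldots,x_{n-1}) \mapsto \|\bigwedge_{i=1}^{n-1} x_i\|$ is an $(n-1)$-norm, in particular a pseudo $(n-1)$-norm on $X$. Proposition \ref{normtometric} then shows directly that $d(x_1,\ldots,x_n) = \|\bigwedge_{i=2}^n (x_i - x_1)\|$ defines a pseudo $n$-metric. The zero characterization is again read off from Lemma \ref{lemext}(ii): since $\|\cdot\|$ is a norm, $d(x_1,\ldots,x_n) = 0$ iff $\bigwedge_{i=2}^n(x_i - x_1) = 0$ iff the vectors $x_i - x_1$, $i = 2,\ldots,n$, are linearly dependent.

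I do not expect a genuine obstacle; the content is essentially bookkeeping once the alternating and multilinear properties of $\wedge$ are in hand. The only points requiring a little care are that the norm on $\bigwedge^n X$ is allowed to be arbitrary, so I must rely solely on its homogeneity and triangle inequality and not on the special inner-product norm \eqref{eqvol} or the Hadamard inequality \eqref{eqhad}; and that the alternating and multilinear identities should be invoked at the level of the element $\bigwedge_{i=1}^n x_i$ of the Banach space $\bigwedge^n X$ before taking the norm, which is legitimate since these relations already hold in $\bigwedge^n X$ itself.
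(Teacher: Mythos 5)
Your proposal is correct and follows essentially the same route as the paper: both reduce part (ii) to part (i) via Proposition \ref{normtometric}, and both verify the four axioms of Definition \ref{defpseudonorm} using Lemma \ref{lemext}(ii) for (semi)definiteness, the alternating property for symmetry, homogeneity of $\wedge$ for positive homogeneity, and the expansion of $(x_1+y)\wedge x_2\wedge\ldots\wedge x_n$ followed by the triangle inequality for multi-sublinearity. Your closing remarks about relying only on the abstract norm axioms and working at the level of elements of $\bigwedge^n X$ before taking norms are exactly the right points of care.
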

\begin{proof}
  By Proposition \ref{normtometric} it suffices to show that
  \eqref{defnnorm} defines an $n$-norm on $X$. Condition (i) in Definition
  \ref{defpseudonorm} follows from Lemma \ref{lemext} (ii), and \eqref{normperm}
  is a consequence of the alternating property of the exterior product.
  Further, the positive homogeneity \eqref{normhom}
  follows from the homogeneity of the exterior product and the positive
  homogeneity of the norm in $\bigwedge^nX$. Finally, inequality
  \eqref{trinorm} is implied by taking norms of the multilinear relation
  \begin{align*}
    (x_1+y)\wedge \bigwedge_{i=2}^n x_i = y \wedge \bigwedge_{i=2}^n x_i
    +\bigwedge_{i=1}^n x_i.
  \end{align*}
  
  \flushright
  \end{proof}

\begin{example} \label{L2}
  Let $(H,\langle \cdot,\cdot,\rangle_H,\|\cdot \|_H)$ be a Hilbert space.
  Then a $2$-norm on $H$ is given by
  \begin{equation} \label{hilbert2}
    \begin{aligned}
    \|(u,v)\|& = \left[ \det \begin{pmatrix} \|u\|_H^2 & \langle u,v\rangle_H \\
        \langle u,v \rangle_H & \|v\|_H^2 \end{pmatrix} \right]^{1/2}\\
    & = \left[ \|u\|_H^2 \|v\|_H^2 - \langle u,v \rangle_H^2 \right]^{1/2}\\
    & = \|u\|_H \|v\|_H(1 - \cos^2 \ang(u,v))^{1/2}=
    \|u\|_H \|v\|_H | \sin \ang(u,v) |.
    \end{aligned}
  \end{equation}
        The corresponding pseudo $3$-metric on $H$ is 
  \begin{equation*} \label{hilbert3}
    \begin{aligned}
    d(u,v,w)&=\left[ \det \begin{pmatrix} \|v-u\|_H^2 & \langle v-u, w-u \rangle_H \\
      \langle v-u, w-u \rangle_H & \|w-u\|_H^2 \end{pmatrix}\right]^{1/2}\\
    & =\left[ \|v-u\|_H^2\|w-u\|_H^2- \langle v-u, w-u \rangle_H^2\right]^{1/2}\\
    &= \|v-u\|_H \|w-u\|_H | \sin \ang(v-u,w-u) |.
    \end{aligned}
  \end{equation*}
  \end{example}

\section{Some pseudo $n$-metrics on manifolds}
\label{sec4}
We use the results from the previous section to set up a pseudo $n$-metric
on the unit sphere of a Hilbert space.
\begin{proposition} \label{metricball}
  Let $(H,\langle \cdot,\cdot,\rangle_H,\|\cdot \|_H)$ be a Hilbert space 
   and consider the unit  sphere
  \begin{align*}
    S_H=\{ x \in H: \| x\|_H = 1 \}.
  \end{align*}
  Then the  $n$-norm defined by \eqref{defnnorm} and \eqref{eqvol}
  generates a pseudo $n$-metric for  $x_i \in S_H$, $i=1,\ldots,n$ as
  follows
  \begin{equation*} \label{nmetricball}
    d(x_1,\ldots,x_n)= \big\|\bigwedge_{i=1}^n x_i \big\|_{\wedge} =
    \big(\det( \langle x_i,x_j \rangle_H )_{i,j=1}^n \big)^{1/2}. 
  \end{equation*}
\end{proposition}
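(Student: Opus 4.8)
The plan is to verify the three defining properties of a pseudo $n$-metric directly for $d(x_1,\ldots,x_n)=\big\|\bigwedge_{i=1}^n x_i\big\|_\wedge$ on $S_H$. Semidefiniteness and symmetry are immediate: if $x_i=x_j$ for some $i\neq j$ the vectors are linearly dependent, so $\bigwedge_i x_i=0$ by Lemma \ref{lemext}(ii); and a transposition of two arguments only changes the sign of the exterior product, which disappears upon taking the norm, giving \eqref{perm}. All the work therefore goes into the simplicial inequality \eqref{tri}. Since $d\geq0$, by Lemma \ref{lempos}(ii) it suffices to treat pairwise distinct $y,x_1,\ldots,x_n$, and I may assume $x_1,\ldots,x_n$ are linearly independent, for otherwise $V:=\bigwedge_{i=1}^n x_i=0$ and \eqref{tri} holds trivially. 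Everything then takes place in the finite-dimensional subspace $\mathrm{span}(x_1,\ldots,x_n,y)$, where the inner product \eqref{eqinpro} on the exterior powers is at my disposal.

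The decisive step is an orthogonal decomposition of the substituted wedge products. Writing $U=\mathrm{span}(x_1,\ldots,x_n)$, I split $y=y_\parallel+y_\perp$ with $y_\parallel=\sum_{k}a_k x_k\in U$ and $y_\perp\perp U$, and set $V_j=x_1\wedge\cdots\wedge y\wedge\cdots\wedge x_n$ (with $y$ in slot $j$). Expanding $y$ and using that any wedge containing a repeated $x_k$ vanishes, all terms coming from $y_\parallel$ collapse to $a_j V$, so that $V_j=a_j V+W_j$ with $W_j=x_1\wedge\cdots\wedge y_\perp\wedge\cdots\wedge x_n$. Because $y_\perp\perp x_i$ for every $i$, the Gram matrix pairing $V$ against $W_j$ in \eqref{eqinpro} has a zero column, whence $\langle V,W_j\rangle=0$ and, by Pythagoras,
\begin{equation*}
  \|V_j\|_\wedge^2=a_j^2\,\|V\|_\wedge^2+\|W_j\|_\wedge^2 .
\end{equation*}

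It remains to bound the right-hand side of \eqref{tri} from below by $\|V\|_\wedge$. For the transverse part, the same Gram-matrix computation gives $\|W_j\|_\wedge=\|y_\perp\|_H\,\big\|\bigwedge_{i\neq j}x_i\big\|_\wedge$, and since $\|V\|_\wedge$ equals the base volume $\big\|\bigwedge_{i\neq j}x_i\big\|_\wedge$ times the height $\dist(x_j,\mathrm{span}(x_i:i\neq j))\le\|x_j\|_H=1$, I obtain $\|W_j\|_\wedge\ge\|y_\perp\|_H\,\|V\|_\wedge$. For the parallel part, the unit-sphere constraint enters through the triangle inequality $\sqrt{1-\|y_\perp\|_H^2}=\|y_\parallel\|_H=\big\|\sum_k a_k x_k\big\|_H\le\sum_k|a_k|$. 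Setting $t=\|y_\perp\|_H$ and combining these with the elementary estimate $\sqrt{p^2+q^2}\ge p\cos\phi+q\sin\phi$ (valid for $p,q\ge0$, $\phi\in[0,\pi/2]$), I get
\begin{equation*}
  \sum_{j=1}^n\|V_j\|_\wedge\ge\|V\|_\wedge\Big(\cos\phi\sum_{j}|a_j|+t\sin\phi\Big)\ge\|V\|_\wedge\big(\sqrt{1-t^2}\,\cos\phi+t\sin\phi\big),
\end{equation*}
and the choice $\cos\phi=\sqrt{1-t^2}$, $\sin\phi=t$ makes the bracket equal to $1$, which is exactly \eqref{tri}.

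I expect the main obstacle to be this final balancing. When $y$ lies in $U$ one has $\|y_\parallel\|_H=1$, so $\sum_j|a_j|\ge1$ and the inequality follows from homogeneity alone; but as soon as $y$ acquires a component off $U$, the parallel coefficients satisfy only $\sum_j|a_j|\ge\sqrt{1-t^2}<1$, and the deficit must be made up by the transverse volumes $\|W_j\|_\wedge$. The Pythagorean identity together with the height estimate $\dist(x_j,\cdot)\le1$ is what converts ``being on the unit sphere'' into the exact compensation encoded by the optimal angle $\phi$; getting the two contributions to combine cleanly, rather than losing a constant, is the delicate point.
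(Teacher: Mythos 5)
Your proof is correct and follows essentially the same route as the paper: the orthogonal splitting $y=y_\parallel+y_\perp$, the Pythagorean identity $\|V_j\|_{\wedge}^2=a_j^2\|V\|_{\wedge}^2+\|W_j\|_{\wedge}^2$, the Hadamard bound $\|V\|_{\wedge}\le\big\|\bigwedge_{i\neq j}x_i\big\|_{\wedge}$, and the unit-sphere constraint $\sum_j|a_j|\ge\sqrt{1-t^2}$ are exactly the ingredients of the paper's argument. Your final optimization over $\phi$ is just the dual form of the triangle inequality in $\R^2$ that the paper applies to the vectors $(|c_i|,|c|)$, so the two proofs coincide in substance.
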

\begin{proof}
  The semidefiniteness and the symmetry are obvious consequences of
  Proposition \ref{propcons}. It remains to prove the simplicial inequality.
  For given $x_i \in S_H$, $i=1,\ldots,n$ we write $y \in S_H$
  with suitable coefficients $c,c_i(i=1,\ldots,n)$ as
\begin{equation} \label{yrep}
  y = \sum_{j=1}^n c_j x_j+ c y^{\perp}, \quad \langle x_i, y^{\perp}\rangle_H=0
 \ (i=1,\ldots,n), \quad \|y^{\perp}\|_H = 1.
\end{equation}
Then we have the equality
  \begin{equation*}\label{ynorm1}
    1 = \langle y, y \rangle_H = \sum_{i,j=1}^n c_i c_j \langle x_i, x_j \rangle_H
    +c^2.
  \end{equation*}
 Using $|\langle x_i ,x_j \rangle_H| \le 1$ we obtain
  \begin{equation} \label{csum}
    1 \le \sum_{i,j=1}^n|c_i| |c_j|+ c^2 = \Big( \sum_{i=1}^n |c_i| \Big)^2
    + c^2.
  \end{equation}
  From \eqref{yrep} and the properties of the exterior product we have for $i=1,\ldots,n$
  \begin{align*}
    & x_1  \wedge \ldots \wedge x_{i-1} \wedge y \wedge x_{i+1} \wedge \ldots
    \wedge x_n \\ & =  x_1 \wedge \ldots \wedge x_{i-1} \wedge (c_i x_i + cy^{\perp}) \wedge x_{i+1} \ldots \wedge x_n.
  \end{align*}
  Further, the orthogonality in \eqref{yrep} implies via \eqref{eqinpro}
  \begin{align*}
    \big\langle x_1  \wedge \ldots \wedge x_n, x_1 \wedge \ldots \wedge x_{i-1} \wedge y^{\perp} \wedge x_{i+1} \ldots \wedge x_n \big\rangle=0,
  \end{align*}
  hence by \eqref{yrep}
  \begin{equation} \label{eqywedge}
    \|x_1\wedge \ldots \wedge x_{i-1} \wedge y \wedge x_{i+1} \wedge \ldots
    \wedge x_n\|_{\wedge}^2  = c_i^2 d_n^2 + c^2 d_{i,n-1}^2,
     \end{equation}
  where we use the abbreviations  $d_{i,n-1}=\|x_1\wedge \ldots \wedge x_{i-1} \wedge x_{i+1}\wedge \ldots \wedge x_n \|_{\wedge}$ and $d_n = \| x_1 \wedge \ldots \wedge x_n\|_{\wedge}$.
  .
  
  Note that Hadamard's inequality \eqref{eqhad} implies $d_n \le d_{i,n-1}$ for
  $i=1,\ldots,n$. With \eqref{csum}, \eqref{eqywedge} and the triangle inequality
  in $\R^2$ we find
  \begin{align*}
    d_n  & \le d_n \Big\|\begin{pmatrix} \sum_{i=1}^n |c_i| \\ |c| \end{pmatrix}\Big\|_2
    \le d_n \Big\|\begin{pmatrix} \sum_{i=1}^n |c_i| \\ n |c| \end{pmatrix}\Big\|_2
    \le d_n \sum_{i=1}^n \Big\|\begin{pmatrix} |c_i| \\ |c| \end{pmatrix}\Big\|_2
    \\
    & \le \sum_{i=1}^n \big(  c_i^2 d_n^2 + c^2 d_{i,n-1}^2 \big)^{1/2}
    = \sum_{i=1}^n d(x_1,\ldots,x_{i-1},y,x_{i+1},\ldots,x_n).
  \end{align*}
  
\flushright \end{proof}
\begin{example} \label{example3}
  As examples we list the
  induced pseudo $2$-metric  (compare \eqref{hilbert2}) and the pseudo $3$-metric on $S_H$:
  \begin{equation*} \label{eqexd2}
  \begin{aligned}
    d(u,v) & = |\sin \ang(u,v) |, \quad u,v \in S_H,
  \end{aligned}
  \end{equation*}
  \begin{equation} \label{eqexd3}
    \begin{aligned}
    d(u,v,w)^2& = \det \begin{pmatrix} 1 & \langle u,v\rangle_H &
      \langle u,w \rangle_H \\
      \langle u,v \rangle_H & 1 & \langle v,w \rangle_H \\
      \langle u,w \rangle_H & \langle v,w \rangle_H & 1 \end{pmatrix}
    \\
    & =  \det \begin{pmatrix} 1 & \cos \ang( u,v)
      &
      \cos \ang( u,w ) \\
      \cos \ang( u,v ) & 1 & \cos \ang( v,w ) \\
      \cos \ang( u,w ) & \cos \ang( v,w ) & 1 \end{pmatrix}
    \\
    &= 1 - \cos^2 \ang( u,v ) - \cos^2 \ang( u,w )-  \cos^2 \ang( v,w )
    \\ &  + 2  \cos \ang( u,v ) \cos \ang( u,w ) \cos \ang( v,w ),
    \quad u,v,w \in S_H.
  \end{aligned}
    \end{equation}
  
  \end{example}
  \begin{remark}
  The value $d(u,v,w)$ in \eqref{eqexd3} agrees with the three-dimensional polar sine
  $\prescript{3}{}{\mathrm{polsin}} (O,u\ v \ w)$, already defined by Euler,
  see \cite[Sect.6]{Eriksson78}. The work \cite{Eriksson78} discusses
  various ways of measuring $n$-dimensional angles and its history
  in spherical geometry. The main result is a simple derivation of
  the law of sines for the $n$-dimensional sine
  $\prescript{n}{}\sin$ and the $n$-dimensional polar sine
  $\prescript{n}{}{\mathrm{polsin}}$ defined by
  \begin{equation} \label{polsine}
  \begin{aligned}
    \prescript{n}{}\sin(O,x_1\cdots x_n)&= \frac{\|x_1 \wedge \ldots \wedge x_n\|^{n-1}}{\prod_{i=1}^n \|x_1 \wedge \ldots \wedge x_{i-1} \wedge x_{i+1}\wedge x_n||},\\
    \prescript{n}{}{\mathrm{polsin}}(O,x_1\cdots x_n)&= \frac{\|x_1 \wedge \ldots \wedge x_n\|}{\prod_{i=1}^n \|x_i\|}.
  \end{aligned}
  \end{equation}
  However, the simplicial inequality in Proposition \ref{metricball} seems not
  to have been observed.
\end{remark}

Next we consider  the Hilbert space $H=L(\R^k,\R^m)$
of linear mappings from $\R^k$ to $\R^m$ endowed with the Hilbert-Schmidt
(or Frobenius) inner product and norm
\begin{equation} \label{eqHS}
  \begin{aligned}
    \langle A, B \rangle_H & = \frac{1}{k}\sum_{j=1}^k \langle A e_j, Be_j\rangle_2, \quad
    A,B \in H, \\
    \|A\|_H^2 & = \frac{1}{k}\sum_{j=1}^k \|Ae_j\|_2^2, \quad A \in H.
  \end{aligned}
\end{equation}
Here the vectors $e_j,j=1,\ldots,k$ form an orthonormal basis of $\R^k$
and the prefactor $\frac{1}{k}$ is used for convenience, so that an
orthogonal map $A\in H$,  i.e. $A^{\star}A=I_k$, satisfies $\|A\|_H=1$. 
It is well known that the Hilbert-Schmidt inner product and norm  are
independent of the choice of orthonormal basis $(e_j)_{j=1}^k$.
By Proposition \ref{metricball} the unit sphere $S_H$ in $H$ carries a
pseudo $n$-metric defined by
 \begin{equation} \label{nmetricH}
    d_H(A_1,\ldots,A_n)= 
    \big(\det( \langle A_i,A_j \rangle_H )_{i,j=1}^n \big)^{1/2}. 
  \end{equation}
  Using Proposition \ref{prop1.1}(i) we arrive at the following corollary.
  \begin{corollary}\label{cor2.1}
    For $k \le m$ equation \eqref{nmetricH} defines a pseudo $n$-metric
    on the Stiefel manifold
    \begin{equation*} \label{eqstiefel}
      \mathrm{St}(k,m) = \{ A \in L(\R^k,\R^m): A^{\star}A = I_k \}.
    \end{equation*}
  \end{corollary}
  \begin{example} \label{example4}
    In case $n=2$ we obtain
    \begin{equation} \label{stiefex2}
      d_H(A_1,A_2) = (1 - \langle A_1,A_2 \rangle_H^2)^{1/2}, \quad A_1,A_2 \in
      \mathrm{St}(k,m).
    \end{equation}
  \end{example}

For the  Grassmannian $\cG(k,m)$ of $k$-dimensional subspaces of $\R^m$,
$k \le m$, the situation is not so simple.
One can identify $\cG(k,m)$ with the quotient space
\begin{align} \label{Gaqs}
  \mathrm{St}(k,m)/\sim, \quad \text{where} \; A \sim B \Longleftrightarrow
  \exists Q \in O(k): A=BQ
\end{align}
by setting $V=\mathrm{range}(A)$ for $[A]_{\sim}\in \mathrm{St}(k,m)/\sim$.  Then the Hilbert Schmidt norm is invariant w.r.t. the equivalence class, but
the inner product \eqref{eqHS} is not, since we obtain terms
$\langle A Q_1e_j,  BQ_2 e_j\rangle_2$ for which the orthogonal maps
$Q_1,Q_2$ differ, in general.

Therefore, we associate with every element $V \in \cG(k,m)$ the orthogonal
projection $P$ onto $V$, given by $P=A A^{\star}$ where $V=\mathrm{range}(A)$ for $A\in \mathrm{St}(k,m)$. The projection is an invariant for the
equivalence classes in \eqref{Gaqs}.
It is appropriate to measure orthogonal projections of rank $k$ by the scaled Hilbert-Schmidt inner product
  and norm
  \begin{equation*} \label{eqkHS}
  \begin{aligned}
    \langle A, B \rangle_{k,H} & = \frac{1}{k}\sum_{j=1}^m
    \langle A e_j, Be_j\rangle_2, \quad
    A,B \in L(\R^m,\R^m), \\
    \|A\|_{k,H}^2 & = \frac{1}{k}\sum_{j=1}^m \|Ae_j\|_2^2, \quad A \in L(\R^m,\R^m),
  \end{aligned}
\end{equation*}
  where $(e_j)_{j=1,\ldots,m}$ form an orthonormal basis of $\R^m$.
  We claim that the orthogonal projection $P$ belongs to the unit ball
  \begin{align*}
    S_{k,H}= \{ B \in L(\R^m,\R^m): \|B\|_{k,H}=1 \}.
  \end{align*}
  To see this, choose a  special orthonormal basis of $\R^m$ where
  $e_1,\ldots,e_k$ are the columns of $A$ and $e_{k+1},\ldots,e_m$ form
  a basis of the orthogonal complement $V^{\perp}$. Since the norm is
  invariant and $AA^* e_j=e_j$ for $j=1,\ldots,k$, $AA^{\star}e_j=0$ for
  $j >k$ we obtain $\|P\|_{k,H}=1$. Thus we can proceed as before
  and invoke Propositions \ref{metricball} and \ref{prop1.1}(i) to obtain
  the following result.

\begin{proposition} \label{cor2.2}
  For $V_i\in \cG(k,m)$ let $P_i\in L(\R^m,\R^m)$, $i=1,\ldots,n$ be the corresponding orthogonal projections. Then the setting
   \begin{equation} \label{nmetricproj}
    d_{\cG}(V_1,\ldots,V_n)= 
    \big(\det( \langle P_i,P_j \rangle_{k,H} )_{i,j=1}^n \big)^{1/2}.
  \end{equation}
   defines a pseudo $n$-metric on $\cG(k,m)$.
\end{proposition}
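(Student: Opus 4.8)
The plan is to realize $\cG(k,m)$ as a subset of the unit sphere of a Hilbert space and then merely restrict the pseudo $n$-metric already produced in Proposition \ref{metricball}. Concretely, I would regard $H=L(\R^m,\R^m)$ as a Hilbert space under the scaled Hilbert--Schmidt inner product $\langle\cdot,\cdot\rangle_{k,H}$, and use the assignment $V\mapsto P_V$, where $P_V$ is the orthogonal projection onto $V$. The discussion preceding the proposition has already supplied the two facts on which this argument rests: $P_V$ depends only on $V$ and not on the representative $A\in\mathrm{St}(k,m)$ with $\mathrm{range}(A)=V$, and $\|P_V\|_{k,H}=1$, so that $P_V\in S_{k,H}$.

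First I would note that $V\mapsto P_V$ is injective, since a subspace is recovered from its orthogonal projection via $V=\mathrm{range}(P_V)$; hence this map identifies $\cG(k,m)$ with the subset $Y=\{P_V:V\in\cG(k,m)\}\subseteq S_{k,H}$. By Proposition \ref{metricball}, the Gram-determinant expression $d(B_1,\ldots,B_n)=\big(\det(\langle B_i,B_j\rangle_{k,H})_{i,j=1}^n\big)^{1/2}$ is a pseudo $n$-metric on $S_{k,H}$. Proposition \ref{prop1.1}(i) then restricts $d$ to a pseudo $n$-metric on $Y$, and transporting this through the identification $V\leftrightarrow P_V$ gives precisely the expression \eqref{nmetricproj}.

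It remains to check that the three defining properties transfer verbatim under the pullback along $V\mapsto P_V$. Semidefiniteness is immediate: if $V_i=V_j$ for some $i\neq j$ then $P_i=P_j$, so two arguments of $d$ coincide and $d_{\cG}=0$. Symmetry is inherited directly from that of $d$. For the simplicial inequality, given $V_1,\ldots,V_n$ and a comparison subspace $W\in\cG(k,m)$, one applies the simplicial inequality for $d$ on $S_{k,H}$ at the point $y=P_W\in S_{k,H}$; since every $W$ yields an admissible comparison point $P_W\in Y$, this produces exactly \eqref{tri} for $d_{\cG}$.

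The argument has essentially no hard step once the preparatory paragraph is in place; the real content lies there, in showing that $V\mapsto P_V$ lands in the unit sphere $S_{k,H}$ and is a faithful invariant of $V$, which is what makes the restriction in Proposition \ref{prop1.1}(i) applicable. Accordingly, the main point to flag is not an obstacle in the present proof but the qualifier ``pseudo'': I would expect definiteness to fail for $n\ge 3$, since the determinant in \eqref{nmetricproj} can vanish for pairwise distinct projections whenever their Gram matrix is singular.
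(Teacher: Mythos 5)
Your proposal is correct and follows exactly the paper's route: the paper likewise verifies in the paragraph preceding the proposition that $P_V$ is an invariant of the equivalence class with $\|P_V\|_{k,H}=1$, and then invokes Propositions \ref{metricball} and \ref{prop1.1}(i) to restrict the sphere pseudo $n$-metric to the set of rank-$k$ orthogonal projections identified with $\cG(k,m)$. The additional checks you spell out (injectivity of $V\mapsto P_V$ and the transfer of the three axioms under the identification) are left implicit in the paper but are exactly the right justification.
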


For an interpretation of \eqref{nmetricproj} let us compute the inner product
of two orthogonal projections.
\begin{lemma} \label{inprodorth}
  For $A_j \in \mathrm{St}(k,m)$, $j=1,2$ let $P_j=A_j A_j^{\star}$ be
  the corresponding orthogonal projections in $\R^m$. Then the following holds
  \begin{equation*} \label{expressorth}
    \langle P_1,P_2 \rangle_{k,H} = \frac{1}{k}\sum_{j=1}^k \sigma_j^2,
  \end{equation*}
  where $\sigma_1\ge \ldots \ge \sigma_k \ge 0$ are the singular values
  of $A_1^{\star}A_2$.
\end{lemma}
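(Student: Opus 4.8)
The plan is to rewrite the scaled Hilbert--Schmidt inner product as a trace pairing and then exploit the cyclic invariance of the trace together with the symmetry of orthogonal projections.

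First I would observe that for any $A,B \in L(\R^m,\R^m)$ the definition of $\langle \cdot,\cdot\rangle_{k,H}$ gives
\[
  \langle A,B\rangle_{k,H} = \frac{1}{k}\sum_{j=1}^m \langle Ae_j, Be_j\rangle_2
  = \frac{1}{k}\sum_{j=1}^m e_j^{\top} A^{\star} B\, e_j = \frac{1}{k}\tr(A^{\star}B),
\]
so the scaled inner product is simply $\tfrac1k$ times the ordinary trace pairing, and in particular is independent of the chosen orthonormal basis $(e_j)_{j=1}^m$.

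Next, since each $P_i = A_i A_i^{\star}$ is symmetric we have $P_1^{\star}=P_1$, whence
\[
  \langle P_1,P_2\rangle_{k,H} = \tfrac1k \tr(P_1^{\star}P_2) = \tfrac1k \tr(P_1 P_2) = \tfrac1k \tr\big(A_1 A_1^{\star} A_2 A_2^{\star}\big).
\]
Applying the cyclic invariance of the trace to bring the factor $A_1$ to the front and then grouping, this equals
\[
  \tfrac1k \tr\big(A_1^{\star} A_2 A_2^{\star} A_1\big) = \tfrac1k \tr\big( M M^{\star}\big), \qquad M := A_1^{\star}A_2 \in \R^{k,k}.
\]

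Finally, $\tr(MM^{\star}) = \|M\|_F^2 = \sum_{j=1}^k \sigma_j^2$, where $\sigma_1 \ge \cdots \ge \sigma_k \ge 0$ are the singular values of the $k\times k$ matrix $M=A_1^{\star}A_2$; dividing by $k$ yields the assertion. There is no genuine obstacle here: the only point requiring a little care is carrying out the cyclic permutation correctly and recognizing that it is the $k\times k$ matrix $A_1^{\star}A_2$, rather than the $m\times m$ products $P_1,P_2$ themselves, whose singular values enter, which is precisely what makes the sum range over $j=1,\ldots,k$.
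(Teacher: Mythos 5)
Your proof is correct and follows essentially the same route as the paper: both reduce $\langle P_1,P_2\rangle_{k,H}$ to $\tfrac1k\tr(A_1^{\star}A_2A_2^{\star}A_1)$ and identify this with $\tfrac1k\sum_{j=1}^k\sigma_j^2$ via the singular values of $A_1^{\star}A_2$. Your direct appeal to cyclic invariance of the trace is a slight streamlining of the paper's detour through completing $A_1,A_2$ to orthogonal $m\times m$ matrices and manipulating block matrices, but the substance is identical.
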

\begin{proof}
  Consider the SVD  $A_1^{\star} A_2 = Y \Sigma Z^{\star}$ with $\Sigma
  = \diag(\sigma_1,\ldots,\sigma_k)$ and $Y,Z\in O(k)$. Further, choose
  $A_j^c \in \mathrm{St}(m-k,m)$ such that $ \begin{pmatrix} A_j & A_j^c
  \end{pmatrix}$, $j=1,2$ are orthogonal. Then we conclude
  \begin{align*}
   k \langle P_1, P_2 \rangle_{k,H} & = \mathrm{tr}(A_1 A_1^{\star}A_2 A_2^{\star})
    = \mathrm{tr}\Big( \begin{pmatrix} A_1 & A_1^c \end{pmatrix}
    \begin{pmatrix} A_1^{\star} A_2 & 0 \\ 0 & 0 \end{pmatrix}
    \begin{pmatrix} A_2^{\star} \\ A_2^{c \star} \end{pmatrix} \Big)\\
  & =  \mathrm{tr}\Big( 
    \begin{pmatrix} A_1^{\star} A_2 & 0 \\ 0 & 0 \end{pmatrix}
    \begin{pmatrix} A_2^{\star} \\ A_2^{c \star} \end{pmatrix} \begin{pmatrix} A_1 & A_1^c \end{pmatrix} \Big)\\
    &  =\mathrm{tr} \begin{pmatrix} A_1^{\star}A_2 A_2^{\star}A_1 & A_1^{\star}A_2
      A_2^{\star} A_1^c \\ 0 & 0 \end{pmatrix}=
    \mathrm{tr} \big(A_1^{\star}A_2 A_2^{\star}A_1 \big) \\
    & = \mathrm{tr}(Y \Sigma Z^{\star} Z \Sigma Y^{\star}) = \mathrm{tr} \Sigma^2
    = \sum_{j=1}^k \sigma_j^2.
  \end{align*}
  \flushright \end{proof}
Recall from \cite[Ch.6.4]{GvL2013} that  the singular values $\sigma_1 \ge \ldots \ge \sigma_k$ of $A_1^{\star}A_2$ define the principal angles
$0 \le \theta_1 \le \ldots \le \theta_k \le \frac{\pi}{2}$ between $V_1$ and $V_2$ by $\sigma_j=\cos(\theta_j)$, $j=1,\ldots,k$.
   
For $n=2$ this leads to  an explicit expression of \eqref{nmetricproj}.
\begin{proposition}\label{lemGn=2}
  In case $n=2$ equation \eqref{nmetricproj} defines a $2$-metric on $\cG(k,m)$
  by
  \begin{equation} \label{explicitn=2}
    d_{\cG}(V_1,V_2)=  (1-\frac{1}{k^2} \big(\sum_{j=1}^k \sigma_j^2)^2 \big)^{1/2}
    = \frac{1}{k} \Big( k^2 - \big( \sum_{j=1}^k \cos^2(\theta_j) \big)^2 \Big)^{1/2},
  \end{equation}
  where $0\le \theta_1\le \cdots \le \theta_k \le \frac{\pi}{2}$
  are the principal angles of the subspaces $V_1$ and $V_2$.
\end{proposition}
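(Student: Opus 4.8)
The plan is to reduce the claim to evaluating the $2\times 2$ Gram determinant in \eqref{nmetricproj} and then to read off both the explicit formula and definiteness from Lemma \ref{inprodorth}. Fix representatives $A_i \in \mathrm{St}(k,m)$ with $V_i = \mathrm{range}(A_i)$ and orthogonal projections $P_i = A_i A_i^{\star}$, $i=1,2$. By definition $d_{\cG}(V_1,V_2)^2$ is the determinant of the symmetric matrix with entries $\langle P_i,P_j\rangle_{k,H}$.

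First I would compute the diagonal entries. Applying Lemma \ref{inprodorth} with both factors equal to $A_i$, the relevant singular values are those of $A_i^{\star}A_i = I_k$, all equal to $1$, so $\langle P_i,P_i\rangle_{k,H} = \frac{1}{k}\sum_{j=1}^k 1 = 1$; this is exactly the normalization that places $P_i$ on the unit sphere $S_{k,H}$. For the off-diagonal entry Lemma \ref{inprodorth} gives directly $\langle P_1,P_2\rangle_{k,H} = \frac{1}{k}\sum_{j=1}^k \sigma_j^2$, where $\sigma_1 \ge \cdots \ge \sigma_k \ge 0$ are the singular values of $A_1^{\star}A_2$, and by symmetry of the inner product the two off-diagonal entries agree. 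Hence
\begin{align*}
  d_{\cG}(V_1,V_2)^2 = \det \begin{pmatrix} 1 & \frac{1}{k}\sum_{j=1}^k \sigma_j^2 \\ \frac{1}{k}\sum_{j=1}^k \sigma_j^2 & 1 \end{pmatrix} = 1 - \Big(\frac{1}{k}\sum_{j=1}^k \sigma_j^2\Big)^2,
\end{align*}
which is the first expression in \eqref{explicitn=2}; substituting $\sigma_j = \cos(\theta_j)$ and extracting the factor $\frac{1}{k}$ from the square root yields the second.

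It then remains to upgrade the pseudo $2$-metric of Proposition \ref{cor2.2} to a genuine $2$-metric, i.e.\ to check definiteness. The crucial elementary fact is that every $\sigma_j$ lies in $[0,1]$: since $A_1,A_2$ are isometries, $A_1^{\star}A_2$ has operator norm at most $\|A_1^{\star}\|\,\|A_2\| = 1$, so all its singular values are bounded by $1$. Therefore $\frac{1}{k}\sum_{j=1}^k \sigma_j^2 \le 1$, the radicand above is nonnegative, and the value vanishes precisely when $\sigma_1 = \cdots = \sigma_k = 1$, that is when all principal angles $\theta_j$ are zero. Because $V_1$ and $V_2$ have the same dimension $k$, this forces $V_1 = V_2$, and the converse is immediate. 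I do not expect a real obstacle here: the whole argument is a short computation, and the only point needing care is the bound $\sigma_j \le 1$, which simultaneously guarantees well-definedness of the root and definiteness of the metric.
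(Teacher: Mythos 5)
Your proposal is correct and follows essentially the same route as the paper, which simply cites Lemma \ref{inprodorth} together with \eqref{nmetricproj} to read off the determinant of the $2\times 2$ Gram matrix with unit diagonal. Your additional verification of definiteness (via $\sigma_j\le 1$ and the observation that all $\sigma_j=1$ forces $V_1=V_2$) is a point the paper's one-line proof leaves implicit, even though the statement claims a genuine $2$-metric, so including it is a sensible refinement rather than a deviation.
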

\begin{proof}
  From Lemma \ref{inprodorth} and \eqref{nmetricproj} we find
  \begin{align*}
    d_{\cG}(V_1,V_2)& = (1-\frac{1}{k^2} \big(\sum_{j=1}^k \sigma_j^2)^2 \big)^{1/2}
    = \frac{1}{k} \Big( k^2 - \big( \sum_{j=1}^k \cos^2(\theta_j) \big)^2 \Big)^{1/2}.
  \end{align*}

  \flushright \end{proof}

In Section \ref{sec6.1} we continue the discussion  of 
pseudo $n$-metrics on the Grassmannian and its relation to known $2$-metrics.

\section{A pseudo $n$-metric on hypergraphs}
\label{sec5}
The notion of hypergraph allows edges which connect more than two vertices;
see \cite{Br13}.
\begin{definition} \label{defhyper}
  A pair $(V,E)$ is called a hypergraph if $V$ is a finite set 
  and $E$ is a subset of the power set $\mathcal{P}(V)$ of $V$.
  An element $e \in E$ is called a hyperedge. In particular,
  it is called an $n$-hyperedge if $\#e=n$.
    The hypergraph is called $n$-uniform if all its hyperedges are $n$-hyperedges.
    \end{definition}
  Obviously, a $2$-uniform hypergraph is an ordinary (undirected)
 graph. The following definition generalizes the standard notion of connectedness.
\begin{definition} \label{defconn}
  Let $(V,E)$ be an $n$-uniform  hypergraph.
  \begin{itemize}
    \item[(i)]
      A subset $P \subset E$ is called a connected component of $(V,E)$
      if  for any two $e_0,e_{\infty} \in P$ there exist 
      finitely many $e_1,\ldots,e_k \in P$ such that  
      $e_{i-1}\cap e_{i} \neq \emptyset$ for $i=1,\ldots, k+1$ where
      $e_{k+1}:=e_{\infty}$.
      Any subset $W \subset \bigcup_{e \in P}e$ is said to be connected by $P$.
\item[(ii)] The hypergraph is called connected if
  $E$ is a connected component of $(V,E)$ and $V=\bigcup_{e \in E}e$. 
  \end{itemize}
\end{definition}
  In case $n=2$  this agrees with the usual notion of connected components.
  For a connected hypergraph one can connect any subset of $V$ by
  the (maximal) connected component $E$.

\begin{definition}\label{defpseudo}
  Let $(V,E)$ be an $n$-uniform  and connected hypergraph. For
  any tuple $(v_1,\ldots,v_n)\in V^n$ define $d(v_1,\ldots,v_n)=0$
  if there exist $i,j \in \{1,\ldots,n\}$ with $i \neq j$ and  $v_i=v_j$.
  Otherwise set $W=\{v_1,\ldots,v_n\}$ and
  \begin{equation*} \label{eqpseudo}
    d(v_1,\ldots,v_n)
      =    \min \{\#P: P \text{ connected component of $(V,E)$ connecting} \; W \}.
  \end{equation*}
\end{definition}

\begin{example} Let $V=\{1,2,3,4\}$ and
  \begin{align*} E=\big\{\{1,2,4\},\{2,3,4\},\{1,3,4\} \big\}.
  \end{align*}
  This is a $3$-uniform hypergraph satisfying
  $ d(1,2,4)=1$, $d(2,3,4)=1$, $d(1,3,4)=1$. Moreover, we have
  $ d(1,2,3)=2$ since
  $\{1,2,3\}$ is
  not a hyperedge and $P=\big\{\{1,2,4\},\{2,3,4\}\big\}$ is a connected
  component of $(V,E)$ which has cardinality $2$ and connects $\{1,2,3\}$.
  Another connected component with this property and of the same cardinality  is
  $P'=\big\{\{1,2,4\},\{1,3,4\}\big\}$.
    \end{example}

\begin{proposition} \label{prophyper}
  Let $(V,E)$ be an $n$-uniform and connected hypergraph. Then
  the map $d:V^n \to \R$ from Definition \ref{defpseudo} is a
  pseudo $n$-metric on $V$.
\end{proposition}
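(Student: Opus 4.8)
The plan is to verify the three defining properties of a pseudo $n$-metric for the map $d$ from Definition \ref{defpseudo}. Semidefiniteness is immediate from the definition, since $d(v_1,\ldots,v_n)=0$ is built in whenever $v_i=v_j$ for some $i\neq j$. Symmetry is equally transparent: the value of $d$ depends only on the unordered set $W=\{v_1,\ldots,v_n\}$ (when the entries are pairwise distinct), so permuting the arguments does not change $W$, hence does not change the minimal cardinality of a connecting connected component. By Lemma \ref{lempos}(ii), it then suffices to establish the simplicial inequality \eqref{tri} for pairwise distinct vertices $y,v_1,\ldots,v_n$, and I would focus all effort there.

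For the simplicial inequality I would argue as follows. Write $W=\{v_1,\ldots,v_n\}$ and, for each $i$, let $W_i=\{v_1,\ldots,v_{i-1},y,v_{i+1},\ldots,v_n\}$ be the tuple with $v_i$ replaced by $y$. Let $P$ be a minimal connected component connecting $W$, so $\#P=d(v_1,\ldots,v_n)$. The key observation is that $y$ lies in $V=\bigcup_{e\in E}e$, so $y$ belongs to some hyperedge, and the connectivity of the whole hypergraph should let me link $y$ into the picture cheaply. More precisely, for each index $i$ I want to produce a connected component $P_i$ connecting $W_i$ with the property that the sizes $\#P_i$ sum to at least $\#P$; the natural candidates are obtained by splitting $P$ along a path structure relative to $y$. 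Since $W=\{v_1,\ldots,v_n\}$ is connected by $P$, each $v_j$ lies in $\bigcup_{e\in P}e$; the idea is to route each $v_j$ (for $j\neq i$) together with $y$ through sub-collections of $P$, choosing which edges of $P$ to assign to which $P_i$ so that every edge of $P$ is used, giving $\sum_i \#P_i \ge \#P = d(v_1,\ldots,v_n)$ and hence $\sum_i d(v_1,\ldots,v_{i-1},y,v_{i+1},\ldots,v_n) \le \sum_i \#P_i$, which is the wrong direction unless the partition is exact. The correct target is therefore to show $d(v_1,\ldots,v_n) \le \sum_i d(\ldots,y,\ldots)$ directly, by using the $P_i$ to reassemble a component connecting $W$ itself.

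The cleanest route, which I would pursue, reverses the construction: start from minimal components $P_i$ connecting each $W_i$ and build from their union a single connected component connecting $W$. Concretely, take $P_i$ minimal with $\#P_i = d(v_1,\ldots,v_{i-1},y,v_{i+1},\ldots,v_n)$, and set $Q=\bigcup_{i=1}^n P_i$. Each $P_i$ contains the vertex $y$ inside some hyperedge (since $y\in W_i$ is connected by $P_i$), so all the $P_i$ share the common ``hub'' vertex $y$; therefore any two edges drawn from different $P_i$ can be linked through a chain passing near $y$, and I must check that $Q$ is genuinely a connected component in the sense of Definition \ref{defconn}(i). Granting that, $Q$ connects $W$ because each original vertex $v_j$ is connected by the $P_i$ with $i\neq j$ (which retain $v_j$), and by minimality of $d(v_1,\ldots,v_n)$ we get $d(v_1,\ldots,v_n)\le \#Q \le \sum_{i=1}^n \#P_i = \sum_{i=1}^n d(v_1,\ldots,v_{i-1},y,v_{i+1},\ldots,v_n)$, which is exactly \eqref{tri}.

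The main obstacle I anticipate is the connectedness verification for $Q=\bigcup_i P_i$: Definition \ref{defconn}(i) demands that any two hyperedges in $Q$ be linked by a chain of pairwise-intersecting hyperedges lying inside $Q$, and while each $P_i$ is internally connected and all of them meet the hub vertex $y$, one must confirm that sharing the single vertex $y$ suffices to chain across different blocks $P_i$ within $Q$ (it does, since an edge containing $y$ exists in each $P_i$). A secondary subtlety is the bound $\#Q \le \sum_i \#P_i$, which is a plain union-cardinality inequality but could be strict if edges are reused; strictness only helps the inequality, so no harm arises. I would also double-check the degenerate situation $n=2$ to confirm consistency with the ordinary shortest-path metric, and note that the same argument specializes correctly there.
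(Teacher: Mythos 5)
Your argument is correct and follows essentially the same route as the paper: reduce via Lemma \ref{lempos}(ii) to pairwise distinct $y,v_1,\ldots,v_n$, take minimal connected components for the modified tuples, and glue them into one component connecting $W$ through the hub vertex $y$ (an edge containing $y$ exists in each block, so chains cross between blocks). The only difference is that the paper unions just $P_1$ and $P_2$, which already connects all of $W$ and yields the sharper two-term bound $d(v_1,\ldots,v_n)\le p_1+p_2$ noted after its proof, whereas your union of all $n$ components gives exactly \eqref{tri}.
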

\begin{remark} In case $n=2$ this generates the topology on hypergraphs
  discussed in \cite{DR21}.
  \end{remark}
\begin{proof}
  The semidefiniteness and symmetry follow directly from  Definition
  \ref{defpseudo}.
  Now consider $(v_1,\ldots,v_n) \in V^n$. If two components agree then
  \eqref{tri} is trivially satisfied. Otherwise $W=\{v_1,\ldots,v_n\} $
  is a subset of $V$ with $\#W=n$. Let $y \in V$ be arbitrary.
  By Lemma \ref{lempos} it suffices to prove the simplicial inequality
  for  $y \notin W$.
   Let $P_1$ resp. $P_2$ be connected components of minimal cardinality
   $p_1=d(y,v_2,\ldots,v_n)$ resp. $p_2=d(v_1,y,\ldots,v_n)$
   which connect $W_1=\{y,v_2,\ldots,v_n\}$ resp. $W_2=\{v_1,y,\ldots,v_n\}$.
   Then $P=P_1 \cup P_2$ is a connected component which connects $\{v_1,\ldots,v_n\}$. To see this, note  first that
   \begin{align*}
     W \subset W_1 \cup W_2 \subset \bigcup_{e \in P_1}e \cup \bigcup_{e \in P_2}e
     =\bigcup_{e \in P} e.
   \end{align*}
   Let $e_0,e_{\infty}\in P$. If both are in $P_1$ or in $P_2$ then they can
   be connected as in Definition \ref{defconn} (i). So assume w.l.o.g.
   $e_0 \in P_1$, $e_{\infty} \in P_2$. Since $P_1$ connects $W_1$ there
   exists a hyperedge $e_+ \in P_1$ such that $y \in e_+$ and a hyperedge $e_- \in
   P_2$ such that $y \in e_-$. Further, there are sequences of
   hyperedges $e_1,\ldots,e_k\in P_1$ such that $e_{i-1}\cap e_i \neq \emptyset$
   for $i=1,\ldots,k+1$ with $e_{k+1}=e_+$. Similarly, there exist
   hyperedges $e_-=e_0',e_1',\ldots,e_{k'}'$ such that
   $e'_{i-1}\cap e'_i \neq \emptyset$ for $i=1,\ldots,k'+1$ with $e'_{k'+1}=
   e_{\infty}$. Since $y \in e_{k+1}\cap e'_0$ we obtain that the sequence
   $e_0,\ldots,e_k,e_+,e_-,e'_1,\ldots,e'_{k'},e_{\infty}$ lies in $P$ and
   has nonempty intersections of successive hyperedges. Therefore,
   \begin{align*}
     d(v_1,\ldots,v_n) \le p_1 + p_2 =d(y,v_2,\ldots,v_n)+d(v_1,y,\ldots,v_n),
   \end{align*}
   which proves \eqref{tri}. \flushright \end{proof}

The proof shows that we have a sharper inequality than \eqref{tri}
 \begin{align*} \label{tri}
   d(x_1,\ldots,x_n) \le \max_{1\le i < j \le n}&\big( d(x_1,\ldots,x_{i-1},y,x_{i+1},\ldots,x_n)\\
   &+ d(x_1,\ldots,x_{j-1},y,x_{j+1},\ldots,x_n)\big).
   \end{align*}

 \section{Further Examples, Problems, and Conjectures}
 \label{sec6}
 \subsection{Generalized Vandermonde pseudo $n$-metrics}
 \label{sec6.0}
 
 It is tempting to conjecture that 
 \begin{equation} \label{gendef}
  d(x_1,\ldots,x_n) = \prod_{1 \le i < j \le n} \|x_i-x_j\|,
  \quad x_i\in X, i=1,\ldots, n,
  \end{equation}
 generates an $n$-metric on a Euclidean space
 $(X, \langle \cdot,\cdot\rangle, \|\cdot\|)$.
    
    However, the expression \eqref{gendef} does not satisfy the simplicial inequality
    in dimensions $\ge 3$ as the following example shows.
 \begin{example}
   Consider $X=\R^3$, $y=0$, and let $x_1,\ldots,x_4$ be the vertices
    of an equilateral tetrahedron:
    \begin{align*}
      x_1&=(1,0,0), \quad x_2 = \frac{1}{3}(-1,2\sqrt{2},0), \\
      x_3& =\frac{1}{3}(-1,-\sqrt{2},\sqrt{6}), \quad
      x_4= \frac{1}{3}(-1,-\sqrt{2}, -\sqrt{6}).
    \end{align*}
    Then one verifies
    \begin{align*}
      \|x_j\|&=1 , \quad j=1,\ldots,4, \\
      \|x_i -x_j\|& =  \sqrt{\frac{8}{3}}, \quad 1\le i < j \le 4.
    \end{align*}
    The simplicial inequality then requires
    \begin{align*}
      \big( \frac{8}{3}\big)^3 \le 4 \big( \frac{8}{3}\big)^{3/2}
      \quad \text{or} \quad 2^5 \le 3^3,
    \end{align*}
    which is wrong.
 \end{example}

 In the following we pursue another highdimensional generalization of  the Vandermonde expression  \eqref{Cprodn}
which uses multilinear symmetric maps.
It is based on the following purely algebraic result.
\begin{theorem} (The general Vandermonde equalities) \label{th:gV}\\
    Let $X,Y$ be vector spaces over $\bbK=\R,\C$ and let
    \begin{align*}
      A\colon X^{M_n}\longrightarrow Y, \quad M_n = \frac{1}{2}n(n-1), n \ge 2,
    \end{align*}
    be an $M_n$-linear and symmetric map. 
    \begin{itemize}
    \item[(i)] For all $x_1,\ldots,x_n \in X$ the following holds:
      \begin{equation} \label{eq:expandV}
        A\Big(\prod_{1\le j < i \le n}(x_i-x_j)\Big)=
        \sum_{\pi\in \mathcal{P}_n} \mathrm{sign}(\pi) A\Big( \prod_{j=1}^n
        x_j^{\pi(j)-1}\Big).
      \end{equation}
    \item[(ii)] The quantity
      \begin{align*}
        V(x_1,\ldots,x_n) = A\Big(\prod_{1\le j < i \le n}(x_i-x_j)\Big)
      \end{align*}
     satisfies for all $\xi \in X$ and $x_1,\ldots,x_n \in X$ the following equality:
      \begin{equation} \label{eq:sumV}
        V(x_1,\ldots,x_n) = \sum_{i=1}^n V(x_1,\ldots,x_{i-1},\xi,x_{i+1},\ldots,x_n).
      \end{equation}
    \end{itemize}
\end{theorem}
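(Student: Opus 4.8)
The plan is to derive both parts from the classical scalar Vandermonde expansion, exploiting that the symmetry and $M_n$-linearity of $A$ force it to depend only on the multidegree of its arguments. Throughout I write $A(x_1^{d_1}\cdots x_n^{d_n})$ for the value of $A$ on the $M_n$-tuple in which each $x_j$ is repeated $d_j$ times (with $\sum_j d_j = M_n$); this is unambiguous precisely because $A$ is symmetric.

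For part (i) I would first record the purely commutative identity in $\bbK[X_1,\ldots,X_n]$,
\[
  \prod_{1\le j<i\le n}(X_i-X_j)=\det\big(X_j^{\,i-1}\big)_{i,j=1}^n=\sum_{\pi\in\mathcal{P}_n}\mathrm{sign}(\pi)\prod_{j=1}^n X_j^{\pi(j)-1},
\]
which is the standard expansion of the Vandermonde determinant \eqref{CdefVandermond} (the first equality follows after relabelling the pairs $i\leftrightarrow j$). I then transport this through $A$: expanding $\prod_{1\le j<i\le n}(x_i-x_j)$ inside $A$ by $M_n$-linearity yields a sum over all selections of either $x_i$ or $-x_j$ from the $M_n$ factors, and by symmetry the contribution of each selection depends only on the resulting multidegree $d=(d_1,\ldots,d_n)$. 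Hence the coefficient of $A(x_1^{d_1}\cdots x_n^{d_n})$ equals the coefficient of the monomial $X_1^{d_1}\cdots X_n^{d_n}$ in the scalar polynomial above, and reading these off gives exactly \eqref{eq:expandV}. (Equivalently: $A$ factors through the $M_n$-th symmetric power of $X$, and one applies the induced linear map to the scalar identity.)

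For part (ii) I would deduce the result from (i) by a sign-reversing involution. Inserting \eqref{eq:expandV} into each summand on the right of \eqref{eq:sumV}, with $\xi$ placed in the $i$-th slot, gives
\[
  \sum_{i=1}^n V(x_1,\ldots,x_{i-1},\xi,x_{i+1},\ldots,x_n)=\sum_{i=1}^n\sum_{\pi\in\mathcal{P}_n}\mathrm{sign}(\pi)\,A\Big(\xi^{\pi(i)-1}\prod_{k\neq i}x_k^{\pi(k)-1}\Big).
\]
The terms with $\pi(i)=1$ carry no factor $\xi$; since each $\pi$ has a unique slot $i=\pi^{-1}(1)$ with $\pi(i)=1$, summing these over $i$ reassembles $\sum_{\pi}\mathrm{sign}(\pi)A(x_1^{\pi(1)-1}\cdots x_n^{\pi(n)-1})=V(x_1,\ldots,x_n)$ by (i). It remains to cancel the terms with $\pi(i)\ge 2$, in which $\xi$ genuinely occurs. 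Here I pair $(i,\pi)$ with $(i',\pi')$, where $i':=\pi^{-1}(1)$ and $\pi':=\pi\circ(i\ i')$ is the composition with the transposition of the slots $i$ and $i'$. Since $\pi(i)\ge 2$ forces $i'\neq i$, this is a fixed-point-free involution keeping one inside the set $\{\pi(i)\ge2\}$; one checks $\mathrm{sign}(\pi')=-\mathrm{sign}(\pi)$, while the two tuples fed to $A$ carry the identical multiset (both omit $x_i$ and $x_{i'}$, both contain $\xi^{\pi(i)-1}$ and $x_k^{\pi(k)-1}$ for $k\notin\{i,i'\}$), so their $A$-values agree by symmetry. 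Thus these terms cancel in pairs and \eqref{eq:sumV} follows.

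The bookkeeping of exponents is routine; the one conceptual point requiring care is the justification in (i) that expanding and then applying $A$ reproduces the multidegree-weighted sum, i.e.\ that $A$ descends to the symmetric power. For (ii) the only thing to verify carefully is that the pairing is a genuine involution and preserves the $A$-value, which is exactly where the symmetry hypothesis and the condition $\pi(i)\ge2$ (ensuring $i\neq i'$) enter.
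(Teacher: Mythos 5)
Your proof is correct. For part (ii) you follow essentially the same route as the paper: insert the expansion from (i), observe that the $\xi$-free terms (those with $\pi(i)=1$) reassemble into $V(x_1,\ldots,x_n)$, and cancel the remaining terms in pairs via the transposition swapping the slot carrying $\xi^{p-1}$ with the slot $\pi^{-1}(1)$ --- this is precisely the paper's pairing $\tilde\pi=\pi\circ\tau_{k\ell}$ with $k=\pi^{-1}(1)$, $\ell=\pi^{-1}(p)$, merely organized by the slot index $i$ rather than by the power $p$. Part (i), however, is where you genuinely diverge. The paper proves \eqref{eq:expandV} by induction on $n$, expanding $A(\cdots,\prod_{j}(x_{n+1}-x_j))$ over subsets $\sigma\subseteq\{1,\ldots,n\}$ and running a rather intricate sign-reversing cancellation to identify the surviving sets $\sigma_p=\{\pi^{-1}(n),\ldots,\pi^{-1}(p)\}$, followed by a bijection $\mathcal{P}_n\times\{1,\ldots,n+1\}\to\mathcal{P}_{n+1}$. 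You instead observe that a symmetric $M_n$-linear map depends only on the multidegree of its arguments (equivalently, factors through $\mathrm{Sym}^{M_n}X$), so the identity is inherited coefficient-by-coefficient from the classical scalar expansion $\prod_{1\le j<i\le n}(X_i-X_j)=\det(X_j^{i-1})=\sum_\pi\mathrm{sign}(\pi)\prod_j X_j^{\pi(j)-1}$ already recorded in \eqref{CdefVandermond}. This is shorter and more conceptual, and it makes transparent why \eqref{eq:expandV} is really the Vandermonde determinant in disguise; the paper's induction is self-contained and combinatorially explicit but considerably heavier. The one point you rightly flag as needing care --- that expanding by multilinearity and grouping by symmetry reproduces exactly the scalar coefficients --- is immediate here since the expansion of the product of differences proceeds identically in $X^{M_n}$ and in $\bbK[X_1,\ldots,X_n]$, and no division by factorials is required.
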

\begin{proof} First note that it is convenient to write the arguments of the symmetric map $A$ as products
  since the sequence of arguments does not matter.\\
    {\bf (i):} \\
  For $n=2$ the equation \eqref{eq:expandV} is obvious:
  \begin{align*}
    \mathrm{sign}(1,2) A(x_2^{\pi(2)-1})+ \mathrm{sign}(2,1)A(x_1^{\pi(1)-1})
    = A x_2 - A x_1 = A(x_2-x_1).
  \end{align*}
  For the induction step from $n$ to $n+1$ we use the symmetry of $A$:
\begin{equation} \label{eq:bigsum}
  \begin{aligned}
    & A\Big(\prod_{1\le j < i \le n+1}(x_i-x_j)\Big) = A\Big(\prod_{1\le j < i \le n}(x_i-x_j),\prod_{j=1}^n(x_{n+1}-x_j)\Big)\\
    & = \sum_{\pi \in \mathcal{P}_n}\mathrm{sign}(\pi)A\Big(\prod_{j=1}^n x_j^{\pi(j)-1},\prod_{j=1}^n(x_{n+1}-x_j)\Big)\\
    & = \sum_{\pi \in \mathcal{P}_n}\mathrm{sign}(\pi) \sum_{\sigma \subseteq
      \{1,\ldots,n\}}(-1)^{|\sigma|}A\Big(\prod_{j=1}^n x_j^{\pi(j)-1},
    \prod_{j \in \sigma}x_j ,x_{n+1}^{n - |\sigma|} \Big).
  \end{aligned}
  \end{equation}
  We consider the summand for any two indices $\ell,k\in \{1,\ldots,n\}$ with
  $\pi(\ell)= \pi(k)+1$ (hence $\ell \neq k$)  and any
  $\sigma \subseteq\{1,\ldots,n\}$ with $k \in \sigma, \ell \notin \sigma$.
  Let $\tau_{k,\ell}$ be the transposition of $k$ and $\ell$ and set
  \begin{align*}
    \tilde{\pi} & = \pi \circ \tau_{k,l}, \quad \tilde{\sigma}=(\sigma \setminus
    \{k\})\cup \{\ell\}.
  \end{align*}
  Then we obtain $|\tilde{\sigma}|=|\sigma|$, $\mathrm{sign}(\tilde{\pi})=
  - \mathrm{sign}(\pi)$, $\tilde{\pi}(k)=\pi(\ell)$, $\tilde{\pi}(\ell)=\pi(k)$ and
 
  \begin{align*}
    & \mathrm{sign}(\tilde{\pi})(-1)^{|\tilde{\sigma}|}A\Big(\prod_{j=1}^n x_j^{\tilde{\pi}(j)-1},
    \prod_{j \in \tilde{\sigma}}x_j ,x_{n+1}^{n - |\tilde{\sigma}|} \Big)\\
    & = -\mathrm{sign}(\pi)(-1)^{|\sigma|} A\Big( \prod_{j=1,j\neq k,\ell}^n
    x_j^{\pi(j)-1}, x_k^{\pi(\ell)-1}, x_{\ell}^{\pi(k)-1}, x_{\ell},
    \prod_{j\in \sigma,j\neq k}x_j, x_{n+1}^{n - |\sigma|}\Big)\\
    &  = -\mathrm{sign}(\pi)(-1)^{|\sigma|} A\Big( \prod_{j=1,j\neq k,\ell}^n
    x_j^{\pi(j)-1}, x_k^{\pi(k)}, x_{\ell}^{\pi(\ell)-1},
    \prod_{j\in \sigma,j\neq k}x_j, x_{n+1}^{n - |\sigma|}\Big)\\
    & = -\mathrm{sign}(\pi)(-1)^{|\sigma|} A\Big( \prod_{j=1}^nx_j^{\pi(j)-1},
    \prod_{j\in \sigma}x_j, x_{n+1}^{n - |\sigma|}\Big).
      \end{align*}
  The last term belongs to $\pi$ and $\sigma$, so that these two terms
  cancel each other in \eqref{eq:bigsum}.
  It is left to consider $(\pi,\sigma)$-terms for which there
  is no index $j \in \{1,\ldots,n\}$ in the sequence
  $\pi^{-1}(n),\pi^{-1}(n-1),\ldots,\pi^{-1}(1)$ such that $\pi^{-1}(j) \notin \sigma$ and $\pi^{-1}(j-1) \in \sigma$. For a given $\pi\in \mathcal{P}(n)$, the only sets
  $\sigma$ satisfying this condition are
  \begin{align*}
    \sigma_p=\{\pi^{-1}(n),\ldots,\pi^{-1}(p)\} \quad \text{for some} \;\;
    1\le p \le n+1,
  \end{align*}
  where we set $\sigma_{n+1}=\emptyset$. Therefore, we obtain from \eqref{eq:bigsum}
  \begin{equation} \label{eq:smallsum}
    \begin{aligned}
      & A\Big(\prod_{1\le j < i \le n+1}(x_i-x_j)\Big)\\
      &= \sum_{\pi \in \mathcal{P}_n}
    \sum_{p=1}^{n+1} \mathrm{sign}(\pi) (-1)^{n+1-p}
     A \Big( \prod_{j=1}^nx_j^{\pi(j)-1},\prod_{j \in \sigma_p} x_{j}, x_{n+1}^{p-1}\Big).
    \end{aligned}
  \end{equation}
  For $p \in \{1,\ldots,n+1\}$ and $\pi \in \mathcal{P}_n$ define
  $\tilde{\pi} \in \mathcal{P}_{n+1}$ by
  \begin{align*}
    \tilde{\pi}(j) = \begin{cases} \pi(j), & \pi(j) \in \{1,\ldots,p-1\},
      j \le n, \\
      \pi(j)+1, & \pi(j) \in \{p,\ldots,n\}, j\le n, \\
      p, & j=n+1.
    \end{cases}
  \end{align*}
  Clearly, the map $\mathcal{P}_n\times \{1,\ldots,n+1\} \to \mathcal{P}_{n+1}$,
  $(\pi,p) \mapsto \tilde{\pi}$ is a bijection which satisfies
  $\mathrm{sign}(\tilde{\pi})= (-1)^{n+p-1}\mathrm{sign}(\pi)$. With this setting
  we may write \eqref{eq:smallsum} as
  \begin{align*}
    A\Big(\prod_{1\le j < i \le n+1}(x_i-x_j)\Big)=
    \sum_{\tilde{\pi}\in \mathcal{P}_{n+1}} \mathrm{sign}(\tilde{\pi})
    A\Big( \prod_{j=1}^{n+1} x_j^{\tilde{\pi}(j)-1}\Big),
  \end{align*}
  which is our assertion.

  {\bf (ii):} \\
  Consider the right-hand side of \eqref{eq:sumV},use formula \eqref{eq:expandV}, and sort by the powers of $\xi$ to obtain
  \begin{align*}
     \sum_{i=1}^n& V(x_1,\ldots,x_{i-1},\xi,x_{i+1},\ldots,x_n) 
    = \sum_{i=1}^n \sum_{\pi \in \mathcal{P}_n} \mathrm{sign}(\pi)
    A \Big( \prod_{j=1, j\neq i}^n x_j^{\pi(j)-1}, \xi^{\pi(i)-1} \Big)\\
    & \quad \quad \qquad = \sum_{p=1}^n \sum_{\pi \in \mathcal{P}_n} \mathrm{sign}(\pi)
    A \Big( \prod_{j=1, \pi(j)\neq p}^n x_j^{\pi(j)-1}, \xi^{p-1} \Big).
  \end{align*}
  We consider two cases:\\
  {\bf $p=1$: } \\
    In this case we have
    \begin{align*}
      \sum_{\pi \in \mathcal{P}_n} \mathrm{sign}(\pi)
      A \Big( \prod_{j=1, \pi(j)\neq 1}^n x_j^{\pi(j)-1}, \xi^{p-1} \Big)&=
       \sum_{\pi \in \mathcal{P}_n} \mathrm{sign}(\pi)
      A \Big( \prod_{j=1}^n x_j^{\pi(j)-1} \Big)\\
      &= V(x_1,\ldots,x_n).
    \end{align*}

    \noindent
    {\bf $2 \le p \le n$:}\\
    We show that the corresponding terms vanish.
    Let $\ell=\pi^{-1}(p)$, $k =\pi^{-1}(1)$ (note $k \neq \ell$) and 
     define $\tilde{\pi}= \pi \circ \tau_{kl}$ as above, so
    that
    \begin{align*}
      \mathrm{sign}(\tilde{\pi})= - \mathrm{sign}(\pi), \quad
        \tilde{\pi}(\ell)=1, \quad \tilde{\pi}(k) = p.
    \end{align*}
    The corresponding $\tilde{\pi}$-term then satisfies
    \begin{align*}
      \mathrm{sign}(\tilde{\pi}) A\Big( \prod_{j=1, \tilde{\pi}(j) \neq p}^n
      x_j^{\tilde{\pi}(j)-1},  \xi^{p-1} \Big)= -  \mathrm{sign}&(\pi) A\Big(\prod_{j=1,j \neq k,\ell}^n
      x_j^{\pi(j)-1}, x_{\ell}^{\tilde{\pi}(\ell)-1}, \xi^{p-1} \Big)\\
    = - \mathrm{sign}(\pi)A\Big(\prod_{j=1,j \neq \ell}^n
      x_j^{\pi(j)-1},x_k^{\pi(k)-1}, \xi^{p-1} \Big) & = - \mathrm{sign}(\pi)A\Big(\prod_{j=1,j \neq \ell}^n
      x_j^{\pi(j)-1}, \xi^{p-1} \Big).
    \end{align*}
    Hence the terms which belong to $\pi$ and $\tilde{\pi}$ cancel each other, and
    the summand corresponding to $p \in \{ 2,\ldots,n\}$ vanishes.
      \end{proof}
An easy consequence of Theorem \ref{th:gV} is the following construction of
a pseudo $n$-metric.
\begin{corollary} (The generalized Vandermonde pseudo $n$-metric) \label{cor:Vnmet}\\
  Let $X$ be a vector space and let $(Y,\|\cdot\|)$ be a normed space. Any $M_n$-linear
  and symmetric map $A:X^{M_n}\to Y$ defines a pseudo $n$-metric on $X$ via
  \begin{equation} \label{eq:defVmet}
    d(x_1,\ldots,x_n)= \big\| A\big( \prod_{1\le j <i\le n}(x_i-x_j) \big) \|,
    \quad  x_1,\ldots,x_n \in X.
  \end{equation}
  \end{corollary}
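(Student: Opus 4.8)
The plan is to verify the three axioms of a pseudo $n$-metric directly, using Theorem \ref{th:gV} as the engine for the simplicial inequality and only elementary multilinear algebra for the other two. Throughout I would exploit that $A$ is symmetric, so the order in which the $M_n$ factors of $\prod_{1\le j<i\le n}(x_i-x_j)$ are fed to $A$ is irrelevant; this lets me treat the Vandermonde product purely as a multiset of vector arguments.

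For semidefiniteness I would observe that if $x_i=x_j$ for some $i\neq j$, then the factor $(x_i-x_j)$ equals the zero vector, so one argument of the multilinear map $A$ vanishes and hence $A(\cdots)=0$; taking the norm gives $d(x_1,\ldots,x_n)=0$.

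For symmetry it suffices to track how the Vandermonde product transforms under a permutation $\pi\in\mathcal{P}_n$. The key computation is that, as a collection of arguments, $\prod_{1\le j<i\le n}(x_{\pi(i)}-x_{\pi(j)})$ coincides with $\prod_{1\le j<i\le n}(x_i-x_j)$ up to the sign $\mathrm{sign}(\pi)$: every unordered pair $\{a,b\}$ contributes the factor $\pm(x_b-x_a)$, with a minus sign precisely at the inversions of $\pi$, and the parity of the number of inversions is $\mathrm{sign}(\pi)$. By multilinearity this single overall sign factors out of $A$, so $A\bigl(\prod_{1\le j<i\le n}(x_{\pi(i)}-x_{\pi(j)})\bigr)=\mathrm{sign}(\pi)\,A\bigl(\prod_{1\le j<i\le n}(x_i-x_j)\bigr)$, and the norm erases it. (Alternatively one reads the same sign behaviour off the expansion \eqref{eq:expandV}.)

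The simplicial inequality is where Theorem \ref{th:gV} does the work. Writing $V(x_1,\ldots,x_n)=A\bigl(\prod_{1\le j<i\le n}(x_i-x_j)\bigr)$ as in that theorem, the additivity identity \eqref{eq:sumV} gives, for any $y\in X$, the exact equality $V(x_1,\ldots,x_n)=\sum_{i=1}^n V(x_1,\ldots,x_{i-1},y,x_{i+1},\ldots,x_n)$ in $Y$. Applying the norm and the triangle inequality in $(Y,\|\cdot\|)$ then yields $d(x_1,\ldots,x_n)\le\sum_{i=1}^n d(x_1,\ldots,x_{i-1},y,x_{i+1},\ldots,x_n)$, which is exactly \eqref{tri}. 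I do not expect a genuine obstacle here: all the combinatorial difficulty has been absorbed into the proof of Theorem \ref{th:gV}, and the only point demanding a little care is the sign bookkeeping in the symmetry step, which the symmetry of $A$ renders routine.
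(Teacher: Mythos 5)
Your proposal is correct and follows essentially the same route as the paper: semidefiniteness from a vanishing factor, and the simplicial inequality by taking norms in the exact identity \eqref{eq:sumV} of Theorem \ref{th:gV}. The only (minor) divergence is the symmetry step, where you pull the sign $\mathrm{sign}(\pi)$ directly out of the permuted Vandermonde product by counting inversions, whereas the paper reindexes the signed permutation sum in the expansion \eqref{eq:expandV}; both arguments are valid and of comparable length.
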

\begin{proof} The semidefiniteness and the nonnegativity are  obvious from the definition \eqref{eq:defVmet}. The prove symmetry we apply the representation \eqref{eq:expandV} for every $\sigma \in \mathcal{P}_n$:
  \begin{align*}
    d(x_{\sigma(1)},\ldots,x_{\sigma(n)})& = \big\| \sum_{\pi\in \mathcal{P}_n} \mathrm{sign}(\pi) A\Big( \prod_{j=1}^n
      x_{\sigma(j)}^{\pi(j)-1}\Big) \big\| \\
      & = \big\| \sum_{\pi\in \mathcal{P}_n} \mathrm{sign}(\pi) A\Big( \prod_{j=1}^n
      x_{j}^{(\pi \circ \sigma^{-1})(j)-1}\Big) \big\|\\
      & = \big\|\mathrm{sign}(\sigma) \sum_{\pi\in \mathcal{P}_n} \mathrm{sign}(\pi\circ \sigma^{-1}) A\Big( \prod_{j=1}^n
      x_{j}^{(\pi \circ \sigma^{-1})(j)-1}\Big) \big\|\\
      & = \big\| \sum_{\tilde{\pi}\in \mathcal{P}_n} \mathrm{sign}(\tilde{\pi}) A\Big( \prod_{j=1}^n
      x_{j}^{\tilde{\pi}(j)-1}\Big) \big\|\\
      & = d(x_1,\ldots,x_n).
  \end{align*}
  Finally, the simplicial inequality follows by taking norms in \eqref{eq:sumV} and
  using the triangle inequality.
\end{proof}
The $n$-mterics from Theorem \ref{Cprop} are special cases of Corollary \ref{cor:Vnmet}
  for $\dim(X)=\dim(Y)=1,2$ when taking the multilinear form $A$ induced by real resp. complex multiplication.
  It will be of interest to see whether true $n$-metrics can be constructed via Corollary \ref{cor:Vnmet} for spaces
  with $\dim(X)\ge 3$.
  

 \subsection{More pseudo $n$-metrics on the Grassmannian $\cG(k,m)$}
 \label{sec6.1}
 The pseudo $n$-metric \eqref{nmetricproj} on $\cG(k,m)$ is not completely
 satisfactory for several reasons.
First, one expects a suitable pseudo $n$-metric on $\cG(k,m)$ 
to be a canonical generalization of the standard $2$-metric given by
\begin{equation*} \label{eqgrassmann}
  d(V_1,V_2)=  \|P_1 - P_2\|_2, \quad V_1,V_2 \in \cG(k,m),
\end{equation*}
where $P_1,P_2$ denote the orthogonal projections onto $V_1,V_2$ respectively,
and $\|\cdot \|_2$ denotes the Euclidean operator norm (spectral norm)
in $\R^m$; see \cite[Ch.6.4.3]{GvL2013}.  
This  $2$-metric may be expressed equivalently as
\begin{equation} \label{eqangle}
  d(V_1,V_2)= \sin (\ang(V_1,V_2)) = \sin(\theta_k),
\end{equation}
where $\theta_k$ is the largest principal angle between $V_1$ and $V_2$;
see \cite[Ch.2.5, Ch.6.4]{GvL2013} and Proposition \ref{lemGn=2}.
Note that the $2$-metric \eqref{eqangle} differs from the $2$-metric  in
\eqref{explicitn=2}.
An alternative to \eqref{nmetricproj} is to measure the exterior product
$\bigwedge_{j=1}^n P_j$ of orthogonal projections by a norm different from
the Hilbert-Schmidt norm (cf. \eqref{polsine}):
\begin{align*}
  \max_{0 \neq F_{\ell}\in \R^{m,m},\ell=1,\ldots,n}\frac{ |\Big(\bigwedge_{j=1}^n P_j\Big)(F_1,\ldots,F_n)|}{\prod_{j=1}^n \| F_j\|_*},
\end{align*}
where $\|\cdot\|_*$ is either the spectral norm $\|\cdot\|_2$ or its
dual $\|\cdot \|^D$ given by the sum of singular values; see
\cite[Ch.5.6]{HJ2013}.  However, neither were we able to prove
the simplicial inequality for this setting, nor did we find an
explicit expression for the associated $2$-metric comparable to
\eqref{eqangle}.

The fact that the Grassmannian may be viewed as a quotient space
of $\mathrm{St}(k,m)$ (see \eqref{Gaqs}) suggests another construction
of  a $2$-metric on $\cG(k,m)$  by setting
\begin{equation} \label{dHGrass}
  d_{\cG,H}(V_1,V_2) = \min_{Q_1,Q_2\in O(k)}d_H(A_1Q_1,A_2Q_2)=
  \min_{Q \in O(k)}d_H(A_1,A_2Q), \quad 
\end{equation}
for $V_j=\mathrm{range}(A_j)$, $A_j \in \mathrm{St}(k,m)$ and $d_H$ from \eqref{nmetricH},
\eqref{stiefex2}.
The symmetry and definiteness of
$d_{\cG,H}$ are obvious. The triangle inequality is also easily seen:
for $A_j \in \mathrm{St}(k,m)$ and $V_j=\mathrm{range}(A_j)$, $j=1,2,3$
 select $Q_1,Q_2 \in O(k)$  with
\begin{equation*} \label{chooseQ}
  d_H(A_1,A_3Q_1)=d_{\cG,H}(V_1,V_3), \quad d_{H}(A_3,A_2Q_2)= d_{\cG,H}(V_3,V_2).
\end{equation*}
Then we obtain
\begin{align*}
  d_{\cG,H}(V_1,V_2) & \le d_H(A_1,A_2 Q_2Q_1) \le d_{H}(A_1,A_3Q_1)+d_{H}(A_3Q_1,A_2Q_2Q_1) \\
  & =d_{H}(A_1,A_3Q_1)+d_{H}(A_3,A_2Q_2)=d_{\cG,H}(V_1,V_3)+ d_{\cG,H}(V_3,V_2).
\end{align*}
The next proposition gives an explicit expression of the $2$-metric \eqref{dHGrass}
in terms of the principal angles $\theta_j$, $j=1,\ldots,k$ between the subspaces $V_1$ and $V_2$.
\begin{proposition} \label{dGangle}
  The expression \eqref{dHGrass} defines a $2$-metric on the Grassmannian $\cG(k,m)$.
  The metric satisfies 
  \begin{equation}  \label{dGrep}
    d_{\cG,H}(V_1,V_2) = \Big(1 - k^{-2}\big(\sum_{j=1}^k \sigma_j \big)^2\Big)^{1/2},
  \end{equation}
  where $\sigma_1=\cos(\theta_1)\ge \cdots \ge \sigma_k=\cos(\theta_k)>0$
  denote the singular values of $A_1^{\star}A_2$.
\end{proposition}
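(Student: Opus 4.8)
The metric axioms---symmetry, definiteness, and the triangle inequality---have already been verified in the discussion preceding the statement, so the only new content is the closed-form expression \eqref{dGrep}. The plan is to evaluate the minimum in \eqref{dHGrass} explicitly. Since by \eqref{stiefex2} one has $d_H(A_1,A_2Q)=\br{1-\langle A_1,A_2Q\rangle_H^2}^{1/2}$, and the map $t\mapsto (1-t^2)^{1/2}$ is decreasing on $[0,1]$, minimizing $d_H(A_1,A_2Q)$ over $Q\in O(k)$ is equivalent to maximizing $|\langle A_1,A_2Q\rangle_H|$. By the definition of the Hilbert--Schmidt inner product in \eqref{eqHS} we have $\langle A_1,A_2Q\rangle_H=\f{1}{k}\tr(A_1^{\star} A_2Q)$, so the task reduces to computing $\max_{Q\in O(k)}|\tr(A_1^{\star} A_2 Q)|$.

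Next I would insert the singular value decomposition $A_1^{\star} A_2=Y\Sigma Z^{\star}$ with $\Sigma=\diag(\sigma_1,\ldots,\sigma_k)$ and $Y,Z\in O(k)$, exactly as in the proof of Lemma \ref{inprodorth}. Using cyclic invariance of the trace,
\begin{align*}
  \tr(A_1^{\star} A_2 Q)=\tr(Y\Sigma Z^{\star} Q)=\tr(\Sigma\, Z^{\star} Q Y)=\tr(\Sigma W),
\end{align*}
where $W:=Z^{\star} Q Y$ ranges over all of $O(k)$ as $Q$ does, because $Y,Z$ are fixed orthogonal matrices. Hence the problem becomes $\max_{W\in O(k)}|\tr(\Sigma W)|$. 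The core computation is then the orthogonal Procrustes-type estimate for a diagonal matrix with nonnegative entries: writing $\tr(\Sigma W)=\sum_{j=1}^k\sigma_j W_{jj}$ and using $\sigma_j\ge 0$ together with $|W_{jj}|\le\norm{We_j}_2=1$, one obtains $\tr(\Sigma W)\le\sum_{j=1}^k\sigma_j$, with equality attained at $W=I_k$ (equivalently $Q=ZY^{\star}$). Thus the maximum of $|\tr(\Sigma W)|$ equals $\sum_{j=1}^k\sigma_j$, the minimum in \eqref{dHGrass} is attained, and
\begin{align*}
  \max_{Q\in O(k)}|\langle A_1,A_2Q\rangle_H|=\f{1}{k}\sum_{j=1}^k\sigma_j.
\end{align*}

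Substituting this into $d_{\cG,H}(V_1,V_2)=\min_{Q}\br{1-\langle A_1,A_2Q\rangle_H^2}^{1/2}$ yields \eqref{dGrep}, and replacing $\sigma_j=\cos\theta_j$ gives the formulation in terms of the principal angles. The only genuine obstacle is the trace maximization; since $\Sigma$ is diagonal with nonnegative entries, the bound $|W_{jj}|\le 1$ makes it elementary, so no appeal to the full von Neumann trace inequality is required. As a byproduct, $d_{\cG,H}(V_1,V_2)=0$ forces $\sum_{j=1}^k\sigma_j=k$, whence every $\sigma_j=1$ and $V_1=V_2$, which re-confirms the definiteness claimed above.
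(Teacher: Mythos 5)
Your proposal is correct and follows essentially the same route as the paper: reduce via \eqref{stiefex2} to maximizing the trace, insert the SVD $A_1^{\star}A_2=Y\Sigma Z^{\star}$, use cyclic invariance to pass to $\tr(\Sigma W)$ with $W\in O(k)$, and bound by $\sum_j\sigma_j$ using $|W_{jj}|\le 1$. The only cosmetic difference is that the paper carries two orthogonal factors $Q_1,Q_2$ and merges them into one, while you work with the single-$Q$ form from the outset; the substance is identical.
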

\begin{proof}
  Consider the singular value decomposition $A_1^{\star}A_2= Y \Sigma Z^{\star}$ where $Y,Z \in O(k)$
  and $\Sigma = \mathrm{diag}(\sigma_1,\ldots,\sigma_k)$.
  By \eqref{stiefex2} the minimum in \eqref{dHGrass} is achieved when
  maximizing
  \begin{align*}
    \langle A_1Q_1, A_2Q_2\rangle_H^2& = k^{-2} \mathrm{tr}^2(Q_1^{\star}A_1^{\star}A_2 Q_2)
    = k^{-2}\mathrm{tr}^2(Q_1^{\star}Y \Sigma Z^{\star} Q_2) \\
   & = k^{-2}\mathrm{tr}^2(\Sigma Z^{\star} Q_2Q^{\star}_1 Y) \quad
    \text{w.r.t.} \quad Q_1,Q_2 \in O(k).
  \end{align*}
  Setting $Q=Z^{\star} Q_2Q^{\star}_1 Y \in O(k)$ this is equivalent to maximizing
  $\mathrm{tr}^2(\Sigma Q)$ with respect to $Q \in O(k)$.
  Since the diagonal elements of $Q\in O(k)$ satisfy $|Q_{jj}| \le 1$ we
  obtain
  \begin{align*}
    \mathrm{tr}^2(\Sigma Q)& = \big|\sum_{j=1}^k \sigma_j Q_{jj} \big|^2
    \le \big( \sum_{j=1}^k \sigma_j |Q_{jj}|)^2 \le \big( \sum_{j=1}^k \sigma_j \big)^2= \mathrm{tr}^2(\Sigma I_k).
    \end{align*}
  This proves our assertion.
    \flushright \end{proof}
Note that \eqref{dGrep} differs from both expressions \eqref{eqangle} and
\eqref{explicitn=2}. Let us further note that we did not succeed to prove the simplicial inequality for the natural generalization of \eqref{dHGrass}
\begin{equation*} \label{dHnGrass}
 d_{\cG,H}(V_1,\ldots,V_n) = \min_{Q_j\in O(k),j=1,\ldots,n}d_H(A_1Q_1,\ldots, A_nQ_n) 
\end{equation*}
with $V_j=\mathrm{range}(A_j)$, $A_j \in \mathrm{St}(k,m)$. We rather conjecture
that this is false which is suggested by analogy to a counterexample
from the next subsection.

Finally, we discuss the special case $k=1$, 
when we can identify $V\in \cG(1,m)$
with a unit vector $v \in S_{\R^m}$ such that $V = \mathrm{span}\{v\}$. 
Proposition \ref{metricball} then shows that a suitable pseudo $n$-metric
on $\cG(1,m)$ is given by
\begin{align*}
  d(V_1,\ldots,V_n)= \big(\det(\langle v_i, v_j \rangle_2)_{i,j=1}^n \big)^{1/2},
\end{align*}
where $V_i=\mathrm{span}\{v_i\}$, $\|v_i\|_2=1$ for $i=1,\ldots,n$.
In particular, in case $n=2$ we obtain
\begin{equation*}
  d(V_1,V_2)=\big( 1 -\langle v_1,v_2\rangle^2\big)^{1/2} =
  \big( 1 -\cos\ang(v_1,v_2)^2\big)^{1/2}= \sin \ang(v_1,v_2),
\end{equation*}
which is consistent with \eqref{eqangle}.
 On the other hand the pseudo $n$-metric \eqref{nmetricproj} leads to
 \begin{equation*}
   d_{\cG}(V_1,\ldots,V_n)= \big(\det(\langle v_i,v_j \rangle_2^2 )_{i,j=1}^n \big)^{1/2}
 \end{equation*}
 which in case $n=2$ differs from \eqref{eqangle}.
 Therefore, we still consider it an open problem to construct a pseudo $n$-metric
 on the Grassmannian which is a natural generalization
 of the $2$-metric \eqref{eqangle}.
 
  \subsection{Pseudo $n$-metric for subsets}
  \label{sec6.2}
  It is natural to ask whether the classical Hausdorff distance
  for closed sets of a metric space (see e.g.\ \cite[Chapter 1.5]{AC84}) has an extension to a pseudo $n$-metric.
\begin{definition} \label{defhaus}
  Let $d:X^n \to \R$ be a pseudo $n$-metric on a set $X$.
  Then we define for nonempty subsets $A_j \subseteq X$, $j=1,\ldots,n$ the
  following quantities
  \begin{equation} \label{defdistset}
    \begin{aligned}
    \mathrm{dist}(x_1;A_2,\ldots,A_n) & = \inf\{d(x_1,x_2,\ldots,x_n):
    x_j \in A_j, j=2,\ldots,n \}, \\
    \mathrm{dist}(A_1;A_2,\ldots,A_n)& = \sup\{ \mathrm{dist}(x_1;A_2,\ldots,A_n)
    : x _1 \in A_1 \} , \\
    d_H(A_1,\ldots,A_n) & = \max_{j=1,\ldots,n} \mathrm{dist}(A_j;A_1,\ldots,
    A_{j-1},A_{j+1},\ldots,A_n) .
    \end{aligned}
  \end{equation}
\end{definition}
  For  an ordinary $2$-metric the construction \eqref{defdistset}
  leads to the familiar Hausdorff distance
  \begin{align*}
    d_H(A_1,A_2) & = \max(\mathrm{dist}(A_1;A_2),\mathrm{dist}(A_2;A_1))
  \end{align*}
  where $\mathrm{dist}(\cdot;\cdot)$ is the Hausdorff semidistance defined
  by
  \begin{align*}
    \mathrm{dist}(A_1;A_2) & = \sup _{x_1 \in A_1} \inf_{x_2 \in A_2} d(x_1,x_2).
  \end{align*}
  It is well known that  $d_H$  is a metric on the set of all closed subsets of $X$
  where 'closedness' is defined with respect to the given metric in $X$;
  see e.g. \cite[Ch.9.4]{AC84}.
  
It is not difficult to see that the map $d_H$ defined in \eqref{defdistset}
is  semidefinite and symmetric .
While the simplicial inequality is true for $n=2$, i.e. the Hausdorff distance,
we claim that it is generally false for  $n \ge 3$.
\begin{example} \label{ex5.1} (Counterexample for the simplicial inequality)\\
  Consider the case $n=3$ and four finite sets $A_1=\{w_1\}$,
  $A_2= \{x_1,\ldots,x_{N}\}$, $A_3=\{y_1,\ldots,y_{N}\}$,
  $A_4= \{z_1,\ldots,z_{N} \}$ where $N \ge 2$.
The set $X$ is defined as $X=A_1 \cup A_2 \cup A_3 \cup A_4$.
  For convenience we introduce the following abbreviations
  for $i=1$ and $j,k,\ell=1,\ldots,N$:
  \begin{align*}
    d_{0jk\ell}& = d(x_j,y_k,z_{\ell}),\quad d_{i0k\ell} = d(w_i,y_k,z_{\ell}),\\
    d_{ij0\ell}& = d(w_i,x_j,z_{\ell}),\quad d_{ijk0} = d(w_i,x_j,y_k).
    \end{align*}
  The values of the $3$-metric with three arguments from different sets $A_{\nu}$
  in $X$ are defined as follows:
  \begin{align*}
    d_{0jk\ell}&= \begin{cases} 0, & j =k =
      \ell , \\
      1, & \text{otherwise},
    \end{cases}\\
    d_{i0k\ell}&= \begin{cases} 1, & k  = \ell , \\
      0, & \text{otherwise},
    \end{cases}\\
    d_{ij0\ell}& = \begin{cases} 1, & j  = \ell  ,\\
      0, & \text{otherwise},
    \end{cases}\\
    d_{ijk0}& = 1.
  \end{align*}
  Our first observation is
  \begin{equation*} \label{eq5.1}
    1= \min_{\ell=1,\ldots,N}(d_{ij0\ell}+d_{i0k\ell}+ d_{0jk\ell}),
  \end{equation*}
  which implies $d_{ijk0}\le d_{ij0\ell}+d_{i0k\ell}+ d_{0jk\ell}=:S_{\ell}$
  for all $i,j, k, \ell$.
  In fact, we find
  \begin{align*}
    S_{\ell}= \begin{cases} 1+1 + 0, & k =j =
      \ell , \\
      0+0 + 1, &
      k =j \neq \ell ,\\
      1+0 + 1, & k \neq j = \ell ,\\
      0+1 + 1, &  j \neq \ell 
      =k ,\\
      0+0+1, & k \neq j \neq \ell 
      \neq k .
    \end{cases}
  \end{align*}
  Further, the simplicial inequalities
  \begin{equation} \label{eq5.2}
  \begin{aligned}
    d_{ij0\ell}&  \le d_{ijk0}+d_{i0k\ell} + d_{0jk\ell},\\
    d_{i0k\ell}& \le d_{ijk0}+ d_{ij0\ell}+ d_{0jk\ell}. \\
    d_{0jk\ell}& \le d_{ijk0}+d_{ij0\ell}+ d_{i0k\ell},
  \end{aligned}
  \end{equation}
  hold, because $d_{ijk0}=1$ and all metric values lie in $\{0,1\}$.

  By the definition of $d_{ijk\ell}$ and  $N\ge2$ we also obtain
  \begin{align*}
     \forall i \quad \exists j,\ell & : d_{ij0\ell}=0;\quad
    \forall j \quad \exists i,\ell : d_{ij0\ell}=0; \quad
    \forall \ell \quad \exists i,j  : d_{ij0\ell}=0;\\
       \forall i \quad \exists k,\ell & : d_{i0k\ell}=0; \quad
    \forall k \quad \exists i,\ell  : d_{i0k\ell}=0; \quad
    \forall \ell \quad \exists i,k  : d_{i0k\ell}=0;\\
    \forall j \quad \exists k,\ell& : d_{0jk\ell}=0; \quad
    \forall k \quad \exists j,\ell : d_{0jk\ell}=0; \quad
    \forall \ell \quad \exists j, k  : d_{0jk\ell}=0.
  \end{align*}
  Following the definition of $d_H$ in \eqref{defdistset} this
  implies
  \begin{align*}
    d_H(A_1,A_2,A_4)& = \max\big(\max_i \min_{j,\ell}d_{ij0\ell},
    \max_j \min_{i,\ell}d_{ij0\ell},\max_\ell \min_{i,j}d_{ij0\ell}\big)=0,\\
     d_H(A_1,A_3,A_4)& = \max\big(\max_i \min_{k,\ell}d_{i0k\ell},
     \max_k \min_{i,\ell}d_{i0k\ell},\max_\ell \min_{i,k}d_{i0k\ell}\big)=0,\\
      d_H(A_2,A_3,A_4)& = \max\big(\max_j \min_{k,\ell}d_{0jk\ell},
    \max_k \min_{j,\ell}d_{0jk\ell},\max_\ell \min_{j,k}d_{0jk\ell}\big)=0.
  \end{align*}
      Note that we also have
     \begin{align*}
       1=\max\big(  \max_i(\min_{j,k} d_{ijk0}),\max_j(\min_{i,k} d_{ijk0}),
       \max_k(\min_{i,j} d_{ijk0})\big)=d_H(A_1,A_2,A_3).
     \end{align*}
     This contradicts 
     the simplicial inequality \eqref{tri}.

     In the following we define $d$ for all triples where at least two elements
     lie in the same set, so that the axioms of a $3$-metric are satisfied.
     Without loss of generality we  define $d(\xi_1,\xi_2,\xi_3)$ when $\xi_1,\xi_2,\xi_3$
     lie in sets $A_1$, $A_2$, $A_3$, $A_4$ with increasing lower index.
     All other values are then given by the permutation symmetry \eqref{perm}.
     Further, we set $d(\xi_1,\xi_2,\xi_3)=0$ if two of the arguments
     agree (semidefiniteness)  or if all three arguments $\xi_1,\xi_2,\xi_3$ lie in the
     same set $A_\nu$. It remains to define the following quantities
     \begin{align*}
       d_{i,j \iota,00}=d(w_i,x_j,x_{\iota}) & = 0, \\
       d_{i0,k \kappa,0}=d(w_i,y_k,y_{\kappa}) & =0 ,\\
       d_{i00,\ell \lambda}=d(w_i,z_{\ell},z_{\lambda}) & = 0,\\
       d_{0,j \iota ,0 \ell}=d(x_j,x_{\iota},z_{\ell}) & = \begin{cases}
         1, & (\iota = \ell) \vee (j = \ell), \\
         0, & \text{otherwise}, \end{cases} \\
       d_{0j0,\ell \lambda}= d(x_j,z_{\ell},z_{\lambda}) & =
        \begin{cases}
         1, & (j = \ell) \vee (j = \lambda), \\
         0, & \text{otherwise}, \end{cases}\\
        d_{00,k \kappa,\ell}=d(y_k,y_{\kappa},z_{\ell}) & = 
        \begin{cases} 1, & (k = \ell) \vee (\kappa = \ell), \\
          0, & \text{otherwise},
        \end{cases}\\
        d_{00k,\ell \lambda}=d(y_k,z_{\ell},z_{\lambda}) & = \begin{cases}
          1, & (k=\ell) \vee (k= \lambda),\\
          0,& \text{otherwise}, \end{cases}\\
        d_{0,j \iota,k0}=d(x_j,x_{\iota},y_k) & =1 , \\
        d_{0j,k \kappa,0}= d(x_j,y_k,y_{\kappa})& = 1.
     \end{align*}
     We have to check all simplicial inequalities involving
     two arguments from the same set and for which the $d$-value
     on the lefthand side is $1$. We use the symbol $*$ to denote values which
     are either $0$ or $1$ and $(* + 1)$ to denote a sum which is $\ge 1$:
\begin{align*}
  1 & = d_{0jk\ell}, \quad \text{if}\; (j \neq k) \vee (k \neq \ell),\\
  & \le d_{0,j \iota,k0}+d_{0\iota k \ell}+ d_{0,j \iota,0\ell}=1+*+*,\\
  & \le d_{0j,k \kappa,0}+d_{0j \kappa \ell}+d_{00,k \kappa,\ell}= 1+* +*,\\
  & \le d_{0j0, \ell \lambda}+d_{00k,\ell \lambda}+d_{0jk \lambda}=* + (* + 1).
\end{align*}
For the last line note that either $ d_{0jk \lambda}=1$ or ($ d_{0jk \lambda}=0$,
$j=k=\lambda$, $d_{00k,\ell \lambda}=1$).
   \begin{align*}
  1 & = d_{ij0 \ell}, \quad \text{if}\; (j =\ell),\\
  & \le d_{i,j \iota,00}+d_{i\iota 0 \ell}+ d_{0,j \iota,0\ell}=*+*+1,\\
   & \le d_{0j0, \ell \lambda}+d_{i00,\ell \lambda}+d_{ij \lambda}=1 + * + *.
  \end{align*}
   \begin{align*}
  1 & = d_{i0k\ell}, \quad \text{if}\; (k =\ell),\\
  & \le d_{i,0,k \kappa,0}+d_{i 0 \kappa \ell}+ d_{00,k \kappa,\ell}=*+*+1,\\
   & \le d_{i00, \ell \lambda}+d_{i0k \lambda}+d_{00k,\ell \lambda}=*+* + 1 .
   \end{align*}
   \begin{align*}
  1 & = d_{ijk0},\\
  & \le d_{i,j \iota,00}+d_{i\iota k 0}+ d_{0,j \iota,k0}=*+1+*,\\
  & \le d_{i0,k \kappa,0}+d_{ij \kappa 0}+d_{0j,k \kappa,0}= *+1 +*.
   \end{align*}
   In each of these three cases we  test only two inequalities since
   the remaining three are satisfied by \eqref{eq5.2}.
   
   It remains to discuss $d$-values where either $j\iota$, $k \kappa$ or
   $\ell \lambda$ appears on the lefthand side.
   \begin{align*}
     1& = d_{0,j \iota, k0}, \\
     & \le d_{ijk0} + d_{i \iota k 0} + d_{i,j \iota,00}= 1 + 1 + *, \\
     & \le d_{0j,k \kappa, 0}+ d_{0 \iota,k \kappa, 0}+ d_{0,j \iota, \kappa,0}
     = * + * + 1, \\
     &\le d_{0jk \ell} + d_{0,j \iota,0 \ell} + d_{0 \iota k \ell} = (1+ *) + *.
   \end{align*}
   For the last line note that either $d_{0jk \ell}=1$ or ( $d_{0jk \ell}=0$,
   $j=k=\ell$, $d_{0, j \iota,0 \ell}=1$).
    \begin{align*}
     1& = d_{0j ,k \kappa, 0}, \\
     & \le d_{ijk0} + d_{i j \kappa 0} + d_{i0, k \kappa,0 }= 1 + 1 + *, \\
     & \le d_{0,j \iota,k 0}+ d_{0,j \iota, \kappa, 0}+ d_{0\iota,k \kappa,0}
     = * + * + 1, \\
     &\le d_{00,k \kappa, \ell}+ d_{0j k \ell}  + d_{0 j \kappa \ell} = (1+*) +  *.
    \end{align*}
    For the last line note that either $d_{0jk \ell}=1$ or ($d_{0jk \ell}=0$,
    $j=k=\ell$, $d_{00,k \kappa, \ell}=1$).

For the final four cases the indices of the metric value on the left
satisfy an extra condition
   \begin{align*}
     1& = d_{0,j \iota, 0 \ell}, \quad \text{if}\; (j= \ell) \vee (\iota = \ell),  \\
     & \le d_{ij0 \ell} + d_{i \iota 0 \ell} + d_{i,j \iota,00}= (1 + *) + *, \\
     & \le d_{0,j \iota,k 0}+ d_{0 j k \ell}+ d_{0 \iota k \ell }
     = 1 + * + *, \\
     &\le d_{0j0, \ell \lambda} + d_{0\iota0, \ell \lambda} + d_{0,j \iota, 0 \lambda}
     = (1+ *) + *.
   \end{align*}
   For last line note that $d_{0j0, \ell \lambda}=1$ if $j = \ell$ and
   $d_{0\iota0, \ell \lambda}=1$ if $\iota=\ell$.
    \begin{align*}
      1& = d_{00,k \kappa, \ell},  \quad \text{if} \; (k=\ell)\vee (\kappa=\ell),\\
     & \le d_{i0k\ell} + d_{i 0 \kappa \ell} + d_{i0, k \kappa,0 }= (1 + *) + *, \\
     & \le d_{0,j k \ell}+ d_{0 j  \kappa \ell}+ d_{0j,k \kappa,0}
     = * + * + 1, \\
     &\le d_{00k , \ell \lambda}+ d_{00 \kappa, \ell \lambda}  + d_{0 , k \kappa, \lambda} = (1+*) +  *.
    \end{align*}
    For the last line note that $d_{00k , \ell \lambda}=1$ if $k = \ell$ and
    $d_{00 \kappa, \ell \lambda}=1$ if $\kappa=\ell$.
     \begin{align*}
     1& = d_{0j 0, \ell \lambda}, \quad \text{if}\; (j= \ell) \vee (j = \lambda),  \\
     & \le d_{ij0 \ell} + d_{i j 0 \lambda} + d_{i00,\ell \lambda}= (1 + *) + *, \\
     & \le d_{0,j \iota,0 \ell}+ d_{0 ,j \iota,0 \lambda}+ d_{0 \iota 0, \ell  \lambda }
     = (1 + *) + *, \\
     &\le d_{0jk \ell} + d_{0jk\lambda} + d_{00k, \ell \lambda}
     = *+ (1+ *).
     \end{align*}
     For the last line note that  either $d_{0jk \lambda}=1$ or ($d_{0jk \lambda}=0$,
     $j=k=\lambda$, $d_{00k, \ell \lambda}=1$).
     \begin{align*}
     1& = d_{00k, \ell \lambda}, \quad \text{if}\; (k= \ell) \vee (k = \lambda),  \\
     & \le d_{i0k \ell} + d_{i 0 k \lambda} + d_{i00,\ell \lambda}= (1 + *) + *, \\
     & \le d_{0j k \ell}+ d_{0 j k \lambda}+ d_{0 j 0, \ell  \lambda }
     =  * + (1+*), \\
     &\le d_{00,k \kappa, \ell} + d_{00,k \kappa, \lambda} + d_{00 \kappa, \ell \lambda} 
     = (1+*)+ *.
     \end{align*}
     For the last line note that $d_{00,k \kappa, \ell}=1$ if $k = \ell$ and
     $d_{00,k \kappa, \lambda}=1$ if $k = \lambda$.

     Finally, in case $N \ge 3$ we have to test the last six blocks of
     simplicial inequalities when a double pair $j \iota$ resp. $k \kappa$
     resp. $\ell \lambda$ is prolonged by another index $j'$ resp.
     $k'$ resp. $\ell'$. For example, two characteristic cases are the following:
     \begin{align*}
       1& = d_{0,j \iota, k0}, \\
       & \le d_{0,j \iota j',00}+d_{0,j j',k0}+ d_{0, \iota j',k0}=0+1+1, \\
       1& = d_{0,j \iota, 0 \ell}, \quad \text{if}\; (j= \ell) \vee (\iota = \ell),  \\   & \le d_{0,j \iota j',00}+ d_{0,j j',0 \ell}+d_{0, \iota j', 0 \ell}=0 +(1+ *).
     \end{align*}
     For the last line note that $d_{0,j j',0 \ell}=1$ if $j=\ell$ and
$d_{0, \iota j', 0 \ell}=1$ if $\iota=\ell$.
     By inspection one finds that the remaining four cases can be handled
     in the same way.
\end{example}
While this example shows that the definition \eqref{defdistset} with an
arbitrary pseudo $n$-metric does not necessarily define a pseudo $n$-metric
on subsets, it is still possible that some of the special $n$-metrics from
Section \ref{s:lin} will have this property.


\end{document}